\newtheorem{lemma}{Lemma}
\newtheorem{definition}{Definition}
\newtheorem{example}{Example}
  \newtheorem{conjecture}{Conjecture}
\newtheorem{corollary}{Corollary}  
\newtheorem{theorem}{Theorem}
  \newcommand{\eop}{\hfill{$\Box$}}
\newenvironment{proof}
{\begin{trivlist}\item[]{{\sc Proof.}}}{\eop\noindent\end{trivlist}}
\begin{document}
  \title{On the Hegselmann-Krause conjecture in opinion dynamics}
  \author{{\sc Sascha Kurz}\thanks{sascha.kurz@uni-bayreuth.de}  \,\,and
      {\sc J\"org Rambau}\thanks{joerg.rambau@uni-bayreuth.de}\\ 
      Department of Mathematics, University of Bayreuth\\ 
      D-95440 Bayreuth, Germany}
  \maketitle
  \vspace*{-4mm}
  \noindent
  {
    \center\small{Keywords: opinion dynamics, consensus/dissent bounded confidence, non linear 
    dynamical systems\\MSC: 39A11, 91D10, 37N99\\} 
  }
  \noindent
  \rule{\textwidth}{0.3 mm}

  \begin{abstract}
    \noindent
    We give an elementary proof of a conjecture by Hegselmann and
    Krause in opinion dynamics, concerning a symmetric bounded
    confidence interval model: If there is a truth and all individuals
    take each other seriously by a positive amount bounded away from
    zero, then all truth seekers will converge to the truth. Here
    truth seekers are the individuals which are attracted by the truth
    by a positive amount. In the absence of truth seekers it was
    already shown by Hegselmann and Krause that the opinions of the
    individuals converge.
  \end{abstract}
  \noindent
  \rule{\textwidth}{0.3 mm}

\section{Introduction}

We answer in the affirmative a conjecture posed by Hegselmann and
Krause about the long-term behavior of opinions in a finite group of
individuals, some of them attracted to the truth, the so-called
\emph{truth seekers}.  Our contribution: Under mild assumptions, the
opinions of all truth seekers converge to the truth, despite being
distracted by individuals not attracted to the truth, the
\emph{ignorants}.

The underlying model for opinion dynamics is the
\emph{bounded-confidence model}: Opinions, which themselves are
represented by real numbers in the unit interval, are influenced by
the opinions of others by means of averaging, but only if not too far
away. This bounded-confidence model (formal definitions below) was
first suggested by Krause in 1997.  It received a considerable amount
of attention in the artificial societies and social simulation
community \cite{lorenz_survey,hegselmann_new,
  hegselmann2006,hegselmann2002,15.1021F,7291M}.

The concept of truth seekers was invented in 2006 by Hegselmann and
Krause~\cite{hegselmann2006}, along with a philosophical discussion
about the scientific context with respect to the notion of truth.  We
blind out the philosophical discussions here and focus on the
resulting dynamical system, governed by difference equations that we
find interesting in their own right.

The opinions of \emph{truth seekers} are not only attracted by
opinions of others; they are additionally attracted by a constant
number, the truth.  The resulting opinion is weighted average of the
result of the original bounded-confidence dynamics and the truth.
Individuals not attracted by the truth in this sense are
\emph{ignorants}. In their paper, Hegselmann and Krause show that if
all individuals are truth seekers -- no matter how small the weight
--, then (the opinions of) all the individuals converge to consensus
on the truth value.

The question we answer in this paper arises when some of the
individuals are ignorants, i.e.{}, the weight of the influence of the
truth is zero for them.  Numerous simulation experiments led
Hegselmann and Krause to the conjecture, that still the opinions of
all the truth seekers finally end up at the truth.  However, a proof
of this fact could not be found so far.  Evidence by simulation only,
however, bears the risk of numerical artefacts -- very much so in the
non-continuous bounded-confidence model.  Therefore, it is desirable
to provide mathematically rigid proofs of structural properties of
bounded-confidence dynamics.

Allthough the conjecture may seem self-understood at first glance
because of the contraction property of the system dynamics for truth
seekers, a second look on the situation reveals that the conjecture
and its confirmation in this paper are far from trivial: several
innocent-looking generalizations of the conjecture are actually false,
as we will show below in the technical parts of the paper.  Relying on
intuition only is dangerous.

Even in the affirmative cases, convergence turns out to be quite slow
in general and far from monotone.  The main difficulty is the
following: the convergence of truth seekers heavily depends on their
long-term influence on ignorants.  Depending on the configuration of
ignorants and the parameters of the system, there are arbitrarily many
iterations in which the truth seekers deviate from the truth.  The
crucial observation is that, during these iterations, the
configuration of ignorants is somehow ``improved'' because the truth
seekers attract them.

After all, the proof is elementary but extremely technical.  We
introduce some structures like the \emph{confidence graph}, that might
prove useful also in other contexts.  Other structures we need are
rather special, probably with limited use beyond this paper.  It
would, therefore, be desirable to find a more elegant proof, revealing
the reason why the conjecture is true.  For example: find a suitable
Lyapunov function.  The examples we give as we go along in the proof,
however, indicate that a certain amount of complexity has to be
captured by the arguments because the line between true and false
conjectures is extremely thin.

\section{Formal problem statement}
\label{sec:form-probl-stat}

Suppose there is a set $[n]:=\{1,\dots,n\}$ of individuals with
opinions $x_i(t)\in[0,1]$ at time $t$ for all $i\in[n]$,
$t\in\mathbb{N}$. The abstract truth is modeled as a constant over
time, denoted by $h\in[0,1]$. The opinion of an individual $i\in[n]$ is
influenced in a time step $t$ only by those individuals which have a
similar opinion, more precisely which have an opinion in the
\textit{confidence interval} of $x_i(t)$.

\begin{definition}
  For $x\in [0,1]$ and a parameter $\varepsilon\ge 0$ we define the
  \emph{confidence set of value} $x$ \emph{at time} $t$ as
  $$
    I_x^\varepsilon(t):=\{j\in[n]\mid |x-x_j(t)|\le \varepsilon\}.
  $$
  As a shorthand we define
  $I_i^\varepsilon(t):=I_{x_i(t)}^\varepsilon(t)$ for any $i\in[n]$.
\end{definition}
The update of the opinions is modeled as a weighted arithmetic mean of
opinions in the confidence set and a possible attraction towards the
truth.
\begin{definition}
  A \emph{weighted arithmetic mean symmetric bounded
    confidence opinion system} (WASBOCOS) is a tupel 
  \begin{equation*}
    (n, h, \varepsilon, \alpha, \beta; \alpha_i(t), \beta_{ij}(t), x_i(0)),
  \end{equation*}
  where
  \begin{itemize}
  \item $n\in\mathbb{N}$ ist the number of \emph{individuals},
  \item $h\in[0,1]$ ist the \emph{truth},
  \item $\varepsilon\in[0,1]$ is the \emph{bounded confidence radius},
  \item $\alpha\in(0,1]$ is a lower bound for the weight of the truth for truth seekers,
  \item $\beta\in(0,\frac{1}{2}]$ is a lower bound for the weight of opinions in the bounded confidence interval,
  \item $\alpha_i(t)\in[\alpha,1]$ or $\alpha_i(t)=0$ for all
    $t\in\mathbb{N}$ is the actual weight of the truth for truth
    seeker~$i$ at time step~$t$,
  \item $\beta_{ij}(t)\in[\beta,1-\beta]$ with $\sum_{i=j}^n
    \beta_{ij} = 1$ for all $i\in[n]$ and for all $t\in\mathbb{N}$ is
    the weight of opinion~$j$ in the view of agent~$i$,
  \item $x_i(0)\in[0,1]$ is the starting opinion of Individual~$i$.
  \end{itemize}
  The \emph{bounded confidence dynamics} on such a system is defined
  by simultaneous updates of the opinions in the following form:
  \begin{equation}
    \label{eq_update}
    x_i(t+1):=\alpha_i(t)\cdot h+\Big(1-\alpha_i(t)\Big)\frac{\sum\limits_{j\in I_i^\varepsilon(t)}\beta_{ij}(t)x_j(t)}
    {\sum\limits_{j\in I_i^\varepsilon(t)}\beta_{ij}(t)}.
  \end{equation}
  \emph{Individuals} are members of the index set $[n]$.
  \emph{Truth seekers} are members of the set $K:=\{k\in[n]\mid
  \alpha_k(t)\ge \alpha\, \forall t\in\mathbb{N}\}$.  All other
  individuals, i.e.{}, those with $\alpha_i(t)=0$ for all $t$, are
  called \emph{ignorants}; their set is denoted by~$\overline{K}$.
\end{definition}

See Figure~\ref{fig:example-0} for a sketch of a typical set of
trajectories.  Remark: The term \emph{symmetric} in the notion of a
(WASBOCOS) refers to the confidence radius, not to the weights that
individuals assign to other individuals' opinions.
\begin{figure}
  \begin{center}
  \includegraphics[width=0.65\linewidth]{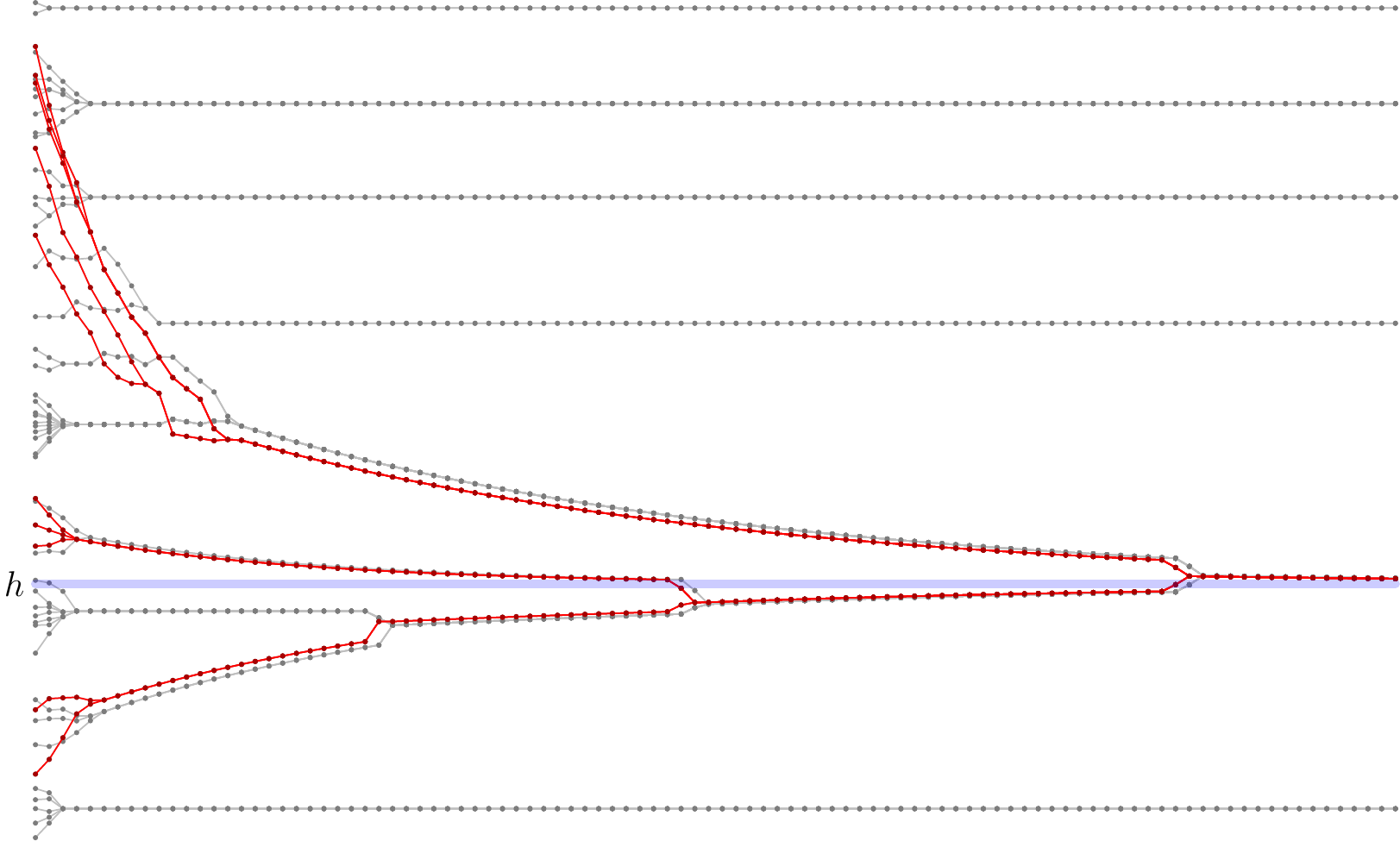}
  \label{fig:example-0}
  \end{center}
\end{figure}
The main result we wish to prove is the following:
\begin{theorem}(Generalized Hegselmann-Krause Conjecture) 
  \label{main_result}
  All truth seekers in an (WASBOCOS) $\Omega$ converge to the
  truth~$h$. Formally, for each $\gamma>0$ and each $\Omega$ there
  exists a $T(\gamma,\Omega)$ so that we have $|x_k(t)-h|<\gamma$ for
  all $k\in K$ and all $t\ge T(\gamma,\Omega)$.
\end{theorem}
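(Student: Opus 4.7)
The starting point is the contraction identity for truth seekers. For $k\in K$, set
\begin{equation*}
M_k(t) := \frac{\sum_{j\in I_k^\varepsilon(t)}\beta_{kj}(t)\,x_j(t)}{\sum_{j\in I_k^\varepsilon(t)}\beta_{kj}(t)}.
\end{equation*}
The update rule gives $x_k(t+1)-h=(1-\alpha_k(t))(M_k(t)-h)$, so
\begin{equation*}
|x_k(t+1)-h|\le (1-\alpha)\max_{j\in I_k^\varepsilon(t)}|x_j(t)-h|.
\end{equation*}
If the confidence set of $k$ ever contained only truth seekers (or only opinions close to $h$) from some time on, iterating this inequality would yield geometric convergence of $x_k(t)$ to $h$. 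The whole obstruction is therefore the ignorants: by entering the confidence set of a truth seeker they can force the right-hand side to stay bounded away from zero.

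My plan is to show that an ignorant cannot remain in the confidence set of a truth seeker for long without itself being dragged toward~$h$. To organise this, introduce the \emph{confidence graph} $G(t)$ on $[n]$ with an edge $\{i,j\}$ whenever $|x_i(t)-x_j(t)|\le\varepsilon$. If an ignorant $i$ has a truth seeker $k$ as neighbour in $G(t)$, then $x_k(t)$ contributes to $x_i(t+1)$ with weight at least $\beta$; hence $x_i(t+1)$ is a convex combination in which a definite fraction has been pulled (together with $k$) toward $h$. A Hegselmann-Krause style monotonicity argument for the sorted opinion vector, combined with the extra contraction of truth seekers, should then show that the sequence of graphs $G(t)$ eventually settles into components whose members cluster, and that between two distinct limit components the gap exceeds $\varepsilon$ from some time on.

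The heart of the proof is an induction on the number of ignorants in a limit component of the confidence graph that contains a truth seeker. For the base case (no ignorants), $k$'s neighbours in $G(t)$ are only truth seekers from some time $T_0$ on, so the contraction identity iterates and yields $|x_k(t)-h|\le(1-\alpha)^{t-T_0}$. For the inductive step consider a component $C$ with at least one ignorant and apply a dichotomy. Either every ignorant in $C$ stays permanently in the confidence set of some truth seeker of $C$, in which case a potential-function argument -- for example a weighted sum of $(x_i(t)-h)^2$ over $i\in C$ with extra mass on truth seekers -- combined with the positive attraction weight $\alpha$ drags the entire component to $h$. Or some ignorant $i\in C$ eventually leaves the truth-seeker subcluster forever; from that time on the truth seekers in $C$ face strictly fewer coupled ignorants and the induction hypothesis applies.

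The main obstacle, as the introduction already warns, is ruling out \emph{endless oscillations} in which ignorants flicker in and out of $\varepsilon$-neighbourhoods of truth seekers, preventing stabilisation of the confidence graph. The key fact that has to be quantified is that each step in which a truth seeker averages with a distant ignorant ``spends'' some of its distance budget to~$h$, and only a bounded total amount can be ``refunded'' by subsequent averaging with ignorants that have themselves drifted toward~$h$. Turning this budget observation into a finite bound on the number of $\varepsilon$-border crossings -- while coping with adversarial weights $\beta_{ij}(t)\in[\beta,1-\beta]$ and time-varying $\alpha_k(t)\in[\alpha,1]$ -- is precisely where the additional combinatorial structures alluded to in the introduction will have to do the technical work.
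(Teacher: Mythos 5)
Your plan correctly isolates the central difficulty --- ignorants that flicker in and out of the $\varepsilon$-neighbourhoods of truth seekers --- but it does not resolve it, and both devices you propose for resolving it have gaps. The dichotomy driving your induction (``either every ignorant in $C$ stays permanently in the confidence set of some truth seeker, or some ignorant leaves the truth-seeker subcluster forever'') omits the third and genuinely problematic case: an ignorant that enters and leaves infinitely often. You acknowledge this at the end, but excluding it is not a technical afterthought to be filled in later; it \emph{is} the theorem. Likewise, the claim that the graphs $G(t)$ ``eventually settle into components'' is unproven and cannot be extracted from the standard Hegselmann--Krause monotonicity of the sorted opinion vector, because here the weights $\beta_{ij}(t)$ are asymmetric and time-varying (the paper exhibits a three-agent example in which the order of the opinions is reversed in a single step), and because Example~\ref{ex_interupted_convergent} shows that the time at which the confidence structure changes can be made arbitrarily large independently of the structural parameters. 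Finally, the proposed potential function (a weighted sum of $(x_i(t)-h)^2$) is not shown to decrease, and it is unclear that any such quadratic potential works: a truth seeker near $h$ averaging with a distant ignorant increases its own term, and whether the ignorant's decrease compensates depends on the adversarial weights in $[\beta,1-\beta]$; the authors explicitly state they were unable to find a suitable Lyapunov function.

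The paper takes a different, more quantitative route that never requires the confidence graph to stabilise. It tracks the \emph{hope interval} spanned by the extremal individuals in the connected components of the confidence graph containing the extremal truth seekers, proves its endpoints are monotone (Lemma~\ref{lemma_decreasing}), and then runs a counting argument on ``good iterations'': within every three steps one of finitely many quantified improvements must occur, which first forces the hope interval into an $\bigl(\varepsilon+\tfrac{\varepsilon\alpha\beta}{12}\bigr)$-neighbourhood of $h$ and then contracts its side lengths geometrically, with at most one interruption. To complete your plan you would need, at minimum, a lemma of the form ``each crossing of the $\varepsilon$-border by an ignorant costs a fixed quantum of some monotone quantity,'' which is precisely what the paper's good-iteration bookkeeping supplies and what your budget heuristic leaves unproved.
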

Note that we use $\gamma>0$ in the statement of convergence instead
of~$\varepsilon>0$ because $\varepsilon$ is traditionally used for the
bounded confidence radius.

It is important that convergence is not just implied by the
contraction property of the dynamics with ignorants ignored.
Ignorants and where their opinions are make a huge difference (see
Figure~\ref{fig:example-1} for an example).
\begin{figure}
  \centering
  \includegraphics[width=0.65\linewidth]{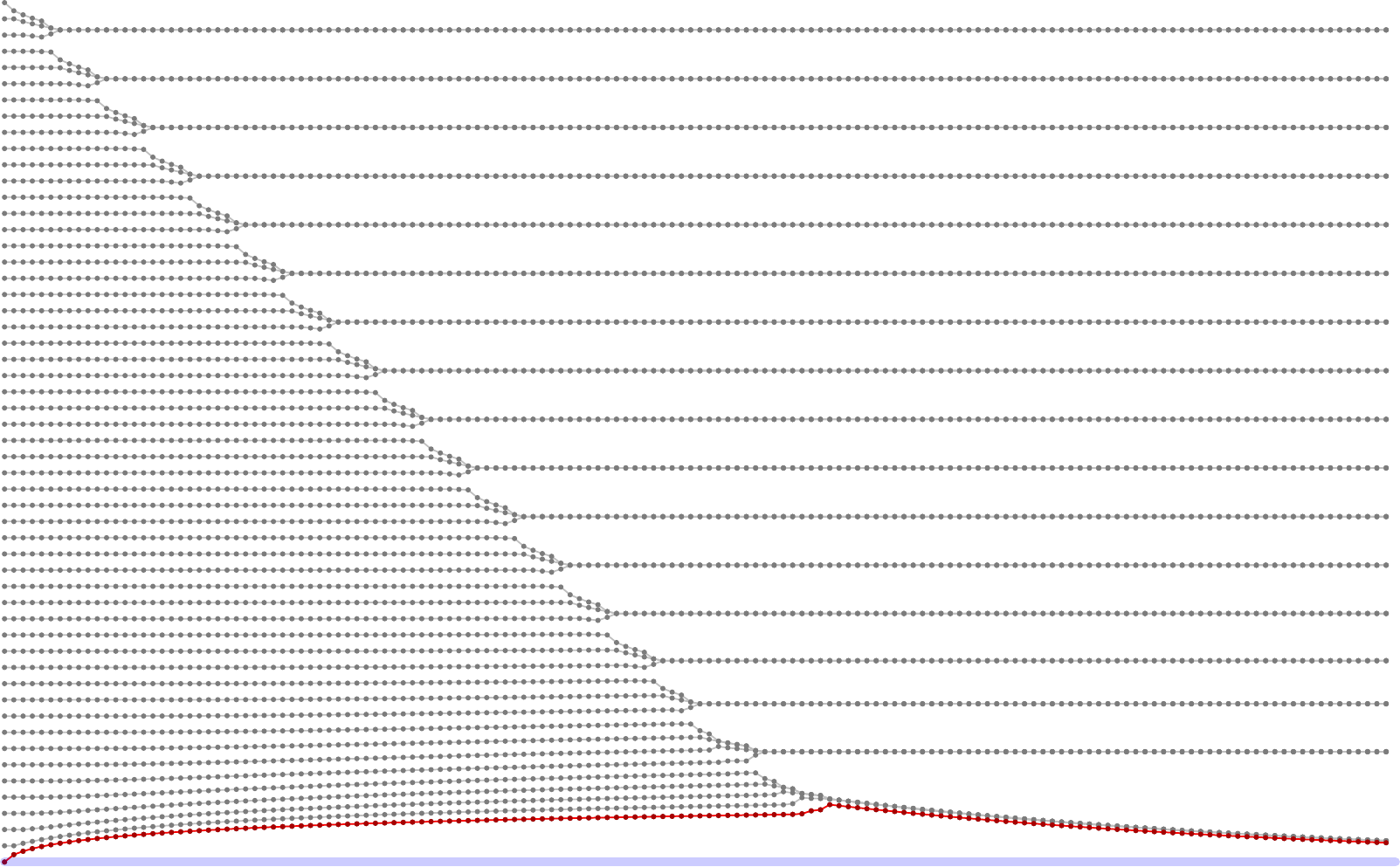}
  \caption{The only truth seeker is located at the bottom on the
    truth; it gets attracted away from the truth for quite some time;
    eventually, the ignorants either are ``converted'' or left
    behind. Thus, we cannot expect monotone convergence of the truth
    seeker farthest from the truth, i.p, reaching the truth once does
    not imply convergence to the truth.}
  \label{fig:example-1}
\end{figure}

It would be nice if one could derive a bound on the speed of
convergence, i.e., a bound on $T(\gamma,\Omega)$, in terms of the
structural parameters $\varepsilon$, $\alpha$, $\beta$, and $n$.
Unfortunately, this is not possible.  The speed of convergence is not
determined by the structural parameters alone.  This can be seen in
the following simple example.
\begin{example}
  \label{ex_interupted_convergent}
  Consider a (WASBOCOS) with truth $h=\varepsilon$, $\varepsilon>0$,
  $\alpha_1(t)=\alpha$, $\alpha_2(t)=0$, $\beta_{ij}(t)=\frac{1}{2}$,
  $\beta=\frac{1}{2}$, $x_1(0)=2\varepsilon$,
  $x_2(0)=\tilde{\varepsilon}$, where
  $\varepsilon>\tilde{\varepsilon}>0$. Let $T\in\mathbb{N}$ be the
  smallest integer so that
  $(1-\alpha)^T\varepsilon\le\tilde{\varepsilon}$. Then by induction
  we have $x_1(t)=\varepsilon+(1-\alpha)^t\varepsilon$ and
  $x_2(t)=x_2(0)=\tilde{\varepsilon}$ for all $t\le T$. So truth
  seeker $1$ seems to monotonically converge to the truth, but at time
  $T+1$ we have
  $x_1(T+1)=\alpha\varepsilon+\frac{1-\alpha}{2}\left(\varepsilon+(1-\alpha)^T\varepsilon+\tilde{\varepsilon}\right)
  \le \frac{1+\alpha}{2}\cdot\varepsilon+\tilde{\varepsilon}$.
\end{example}
See Figure~\ref{fig:example-2} for a sketch of the situation.
\begin{figure}
  \centering
  \includegraphics[width=0.65\linewidth]{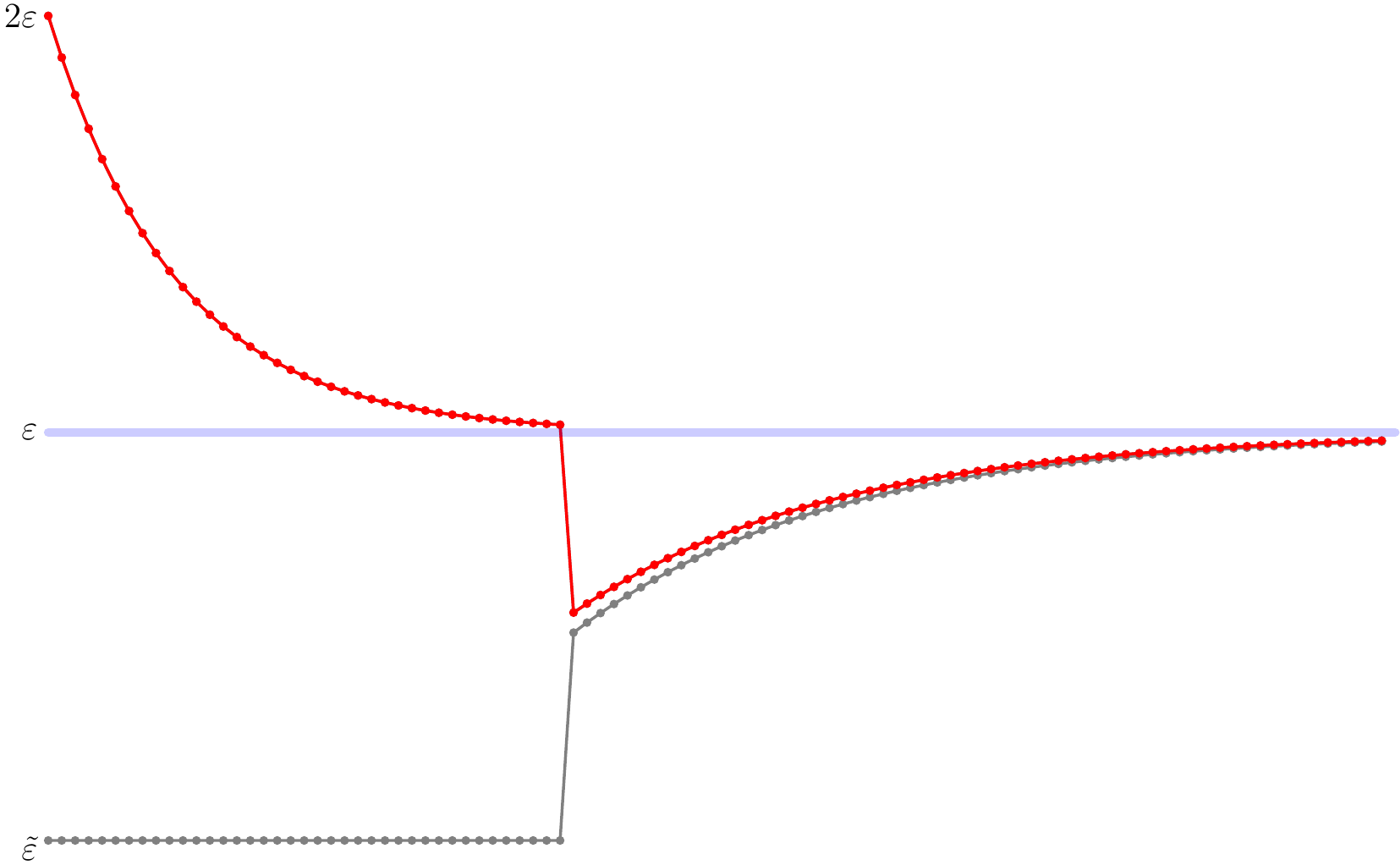}
  \caption{A sketch of interrupted convergence: a lonely truth seeker
    starting at $2\varepsilon$ seems to monotonically converge to the
    truth right away, but suddenly its confidence interval picks up
    the ignorant on the other side of the truth, and the truth seeker
    gets distracted.  However, finally the ignorant gets distracted
    himself, and convergence is eventually established.}
  \label{fig:example-2}
\end{figure}
Since we may choose $\tilde{\varepsilon}$ arbitrarily small, we find
the following: in general, we can not expect that
for every $\gamma>0$ there is a $T(\gamma,\varepsilon,\alpha,\beta,n)$
such that for all $t\ge T(\gamma,\varepsilon,\alpha,\beta,n)$ we have
$|x_k(t)-h|<\gamma$ for each truth seeker $k\in K$.  But we may have
\emph{interrupted convergence}: In a first phase, the truth seekers
come arbitrarily close to the truth in time only dependent on the
structural parameters; then, they may temporarily get distracted at
some point; finally, they converge to the truth in time depending only
on the structural parameters \emph{and the time of distraction}.  This
can be formalized as follows:
\begin{definition}
  \label{def_interupted_convergent}
  Given $\varepsilon$, $\alpha$, $\beta$, $n$, we say that truth
  seekers $k\in K$ are (1-fold) \emph{interrupted convergent to the
    truth}, if for each $\gamma > 0$ there exist two functions
  $T_1^s(\gamma,\varepsilon,\alpha,\beta,n)$ and
  $T_2^s(\gamma,\varepsilon,\alpha,\beta,n,T_1^e)$, so that for each
  (WASBOCOS) $\Omega$, with structural parameters $\varepsilon$,
  $\alpha$, $\beta$ and $n$, there exists an $T_1^e\in\mathbb{N}$
  satisfying
  \begin{eqnarray*}
    &&\forall k\in K,\,\forall t\in[T_1^s(\gamma,\varepsilon,\alpha,\beta,n),T_1^e]:\, |x_k(t)-h|<\gamma,\\
    &&\forall k\in K,\,\forall t\ge T_2^s(\gamma,\varepsilon,\alpha,\beta,n,T_1^e):\,|x_k(t)-h|<\gamma.
  \end{eqnarray*}
\end{definition}

Theorem \ref{main_result} is now a corollary of the following
substantially strengthened Theorem:
\begin{theorem}
  \label{thm_interupted_convergent}
  All truth seekers in an (WASBOCOS) $\Omega$ are (1-fold) interrupted
  convergent to the truth.
\end{theorem}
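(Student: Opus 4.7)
The plan is to combine a basic contraction estimate for each truth seeker with a structural analysis, via the confidence graph, of how ignorants enter, disturb, and ultimately cease to disturb truth seekers.

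First I would derive the elementary contraction inequality: for any truth seeker $k$, the weighted average of opinions in its confidence set lies within $\varepsilon$ of $x_k(t)$, and $h$ enters the update with weight at least $\alpha$, so
\[
|x_k(t+1) - h| \;\le\; (1-\alpha)\bigl(|x_k(t) - h| + \varepsilon\bigr).
\]
Iterating yields exponential contraction into the attracting region $|x_k - h| \le (1-\alpha)\varepsilon/\alpha$ in a number of steps controlled only by $\varepsilon$ and $\alpha$. For $\gamma$ larger than this floor, this already delivers a structural $T_1^s$. For smaller $\gamma$, I would sharpen the contraction by observing that once all truth seekers are clustered near $h$ and no ignorant sits in any of their confidence sets, every relevant confidence-set average is itself close to $h$, so the $\varepsilon$ in the inequality can be replaced by the current diameter of the truth-seeker cluster, turning the recurrence into a genuine exponential contraction to $h$.

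Second, I would analyse ignorant-induced disruption through the confidence graph. By symmetry of the bounded-confidence radius, whenever an ignorant $i$ lies within $\varepsilon$ of a truth seeker $k$, the truth seeker lies within $\varepsilon$ of $i$ as well, so $i$ is itself averaged with opinions near $h$ and moves toward $h$ by an amount quantitatively controlled by $\beta$. A careful case analysis should show that every ignorant $i$ which ever appears in a truth seeker's confidence set eventually reaches one of two stable regimes: either $i$ is permanently absorbed into a small neighbourhood of $h$, or $i$ is pushed permanently out of the confidence range of every truth seeker. Since there are at most $n$ ignorants, at most $n$ such resolution events can occur. Taking $T_1^e$ to be just before the last unresolved invasion, and combining the sharpened contraction with a bound on the ignorant positions at time $T_1^e$, one obtains $T_2^s$ as a function of $T_1^e$ and the structural parameters.

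The main obstacle will be making the second step rigorous: establishing that the set of ignorants still capable of disturbing a truth seeker strictly shrinks over time, with no arbitrarily-late oscillation in which an ignorant repeatedly enters and leaves some truth seeker's confidence range. As the authors caution, the dynamics are non-monotone and several innocent-looking variants of the conjecture are false, so finding the right combinatorial invariant on the confidence graph --- the \emph{elementary but extremely technical} machinery advertised in the introduction --- is the real crux of the argument, and is where quantitative bookkeeping using $\beta$ and $\alpha$ simultaneously will have to enter.
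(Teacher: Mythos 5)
Your overall architecture (finite-time entry into a neighbourhood of $h$, a combinatorial argument that ignorant disturbances resolve in finitely many events, then clean geometric contraction) matches the shape of the paper's proof, but the step you yourself flag as the obstacle is exactly the content of the proof, and it is missing. The dichotomy ``each ignorant that ever enters a truth seeker's confidence set is either permanently absorbed near $h$ or permanently expelled, with at most $n$ resolution events'' is asserted without argument; nothing in your proposal rules out an ignorant --- or, worse, a chain of ignorants linked through the confidence graph, none of which individually lies in any truth seeker's confidence set --- hovering at the boundary and perturbing the truth seekers at arbitrarily late times by amounts that never add up to a resolution. The paper fills this gap with a quantitative potential argument: it tracks the \emph{hope interval} $[x_{\hat{l}(t)}(t),x_{\hat{u}(t)}(t)]$ spanned by the connected component of the confidence graph containing the extreme truth seekers, proves it is nested over time (Lemma \ref{lemma_decreasing}), and shows that within every one to three steps either an individual permanently leaves it (at most $n$ times) or one of its endpoints moves toward $h$ by the fixed amount $\frac{\varepsilon\alpha\beta^2}{12}$ (hence at most $O\!\left(\frac{1}{\varepsilon\alpha\beta^2}\right)$ times), via the case analysis on the distance classes $\mathcal{N},\mathcal{M},\mathcal{F}$. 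Without a quantified ``progress per event'' of this kind, ``finitely many resolution events'' yields no time bound, and without the connected-component (rather than direct-neighbour) bookkeeping you cannot control indirect influence.

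Two further concrete problems. First, your initial contraction $|x_k(t+1)-h|\le(1-\alpha)\left(|x_k(t)-h|+\varepsilon\right)$ only drives truth seekers into a ball of radius about $(1-\alpha)\varepsilon/\alpha$, which is large for small $\alpha$, and it says nothing about the ignorants attached to them through the confidence graph; it is the latter that determine all future disturbances, which is why the paper's phase-one target is a bound on the whole hope interval, not on the truth seekers. Second, your ``sharpened contraction'' assumes no ignorant sits in any truth seeker's confidence set, but the generic terminal configuration is the opposite: absorbed ignorants sit inside the cluster forever, and one must show they are dragged to $h$ along with the truth seekers --- this is Lemma \ref{lemma_epsilon_interval}, a two-step contraction by the factor $1-\frac{\alpha\beta}{2}$ that uses the lower bound $\beta$ essentially. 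As it stands the proposal is a plausible road map rather than a proof; the crux you name at the end is the theorem itself.
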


Originally Hegselmann and Krause considered the (WASBOCOS) model for
$\alpha_{i}(t)\in\{0,\alpha\}$ and $\beta_{ij}(t)=\frac{1}{n}$.  In
the case of complete absence of truth seekers they have already
proved, that the opinion of each individual converges, as can be
expected, not necessarily to the truth. In fact in general the
individuals form several clusters, where two individuals of different
clusters converge to different opinions.

We give an example without truth seekers where the individuals will
converge to five different clusters.

\begin{example}
  \label{ex_1}
  Consider a (WASBOCOS) with $\alpha_i(t)=0$ (no truth seekers),
  $\beta=\beta_{ij}(t)=\frac{1}{n}$, $n=12$; the values of~$\alpha$
  and~$h$ do not matter.  The starting positions are given by
  \begin{eqnarray*}
   &x_1(0)=x_2(0)=0,\,
   x_3(0)=\varepsilon,\,
   x_4(0)=2\varepsilon,\,
   x_5(0)=3\varepsilon,\,
   x_6(0)=x_7(0)=4\varepsilon,\,\\
   &x_8(0)=5\varepsilon,\,
   x_9(0)=6\varepsilon,\,
   x_{10}(0)=7\varepsilon,\,
   \text{and}\,
   x_{11}(0)=x_{12}(0)=8\varepsilon,
  \end{eqnarray*}
  see Figure \ref{fig_ex_1}.

  \medskip

  In Table \ref{table_ex_1} we give the complete dynamics of the
  opinions of all $12$ individuals over time until the opinion of
  every individual has converged. For brevity we write $x_i$ instead
  of $x_i(t)$. After three time steps, see Figure~\ref{fig:example-5}
  for the dynamics, we have reached a stable state, see Figure
  \ref{fig_ex_1_2} for the resulting positions of the individuals.
\end{example}

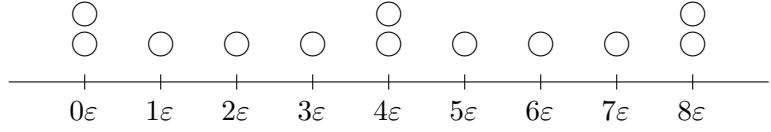
\begin{figure}[htp]
  \begin{center}
    \setlength{\unitlength}{1.0cm}
    \begin{picture}(8,1.4)
      \put(0,1){\circle{0.3}}
      \put(0,1.4){\circle{0.3}}
      \put(1,1){\circle{0.3}}
      \put(2,1){\circle{0.3}}
      \put(3,1){\circle{0.3}}
      \put(4,1){\circle{0.3}}
      \put(4,1.4){\circle{0.3}}
      \put(5,1){\circle{0.3}}
      \put(6,1){\circle{0.3}}
      \put(7,1){\circle{0.3}}
      \put(8,1){\circle{0.3}}
      \put(8,1.4){\circle{0.3}}
      \put(-1,0.5){\line(1,0){10}}
      \multiput(0,0)(1,0){9}{\put(0,0.4){\line(0,1){0.2}}}
      \put(-0.2,0){$0\varepsilon$}
      \put(0.8,0){$1\varepsilon$}
      \put(1.8,0){$2\varepsilon$}
      \put(2.8,0){$3\varepsilon$}
      \put(3.8,0){$4\varepsilon$}
      \put(4.8,0){$5\varepsilon$}
      \put(5.8,0){$6\varepsilon$}
      \put(6.8,0){$7\varepsilon$}
      \put(7.8,0){$8\varepsilon$}
    \end{picture}
    \caption{Starting positions of the individuals in Example \ref{ex_1}.}
    \label{fig_ex_1}
  \end{center}
\end{figure}

\begin{table}[htp]
  \begin{center}\renewcommand{\arraystretch}{1.5}
    \begin{tabular}{r@{\qquad}rrrrrrrrr}
      \toprule
      $t$ & $x_1=x_2$ & $x_3$ & $x_4$ & $x_5$ & $x_6=x_7$ & $x_8$ & $x_9$ & $x_{10}$ & $x_{11}=x_{12}$ \\
      \midrule
      0 & $0\varepsilon$ & $1\varepsilon$ & $2\varepsilon$ & $3\varepsilon$ & $4\varepsilon$ & $5\varepsilon$ &
      $6\varepsilon$ & $7\varepsilon$ & $8\varepsilon$ \\[0.9mm]
      1 & $\frac{1}{3}\varepsilon$ & $\frac{3}{4}\varepsilon$ & $2\varepsilon$ & $\frac{13}{4}\varepsilon$ & $4\varepsilon$
      & $\frac{19}{4}\varepsilon$ & $6\varepsilon$ & $\frac{29}{4}\varepsilon$ & $\frac{23}{3}\varepsilon$ \\[1.6mm]
      2 & $\frac{17}{36}\varepsilon$ & $\frac{17}{36}\varepsilon$ & $2\varepsilon$ & $\frac{15}{4}\varepsilon$ & $4\varepsilon$
      & $\frac{17}{4}\varepsilon$ & $6\varepsilon$ & $\frac{271}{36}\varepsilon$ & $\frac{271}{36}\varepsilon$ \\[1.6mm]
      3 & $\frac{17}{36}\varepsilon$ & $\frac{17}{36}\varepsilon$ & $2\varepsilon$ & $4\varepsilon$ & $4\varepsilon$ &
      $4\varepsilon$ & $6\varepsilon$ & $\frac{271}{36}\varepsilon$ & $\frac{271}{36}\varepsilon$\\
      \bottomrule
    \end{tabular}
    \medskip
    \caption{The dynamics of Example \ref{ex_1} in numbers.}
    \label{table_ex_1}
  \end{center}
\end{table}

\begin{figure}[htbp]
  \centering
  \includegraphics[width=0.65\linewidth]{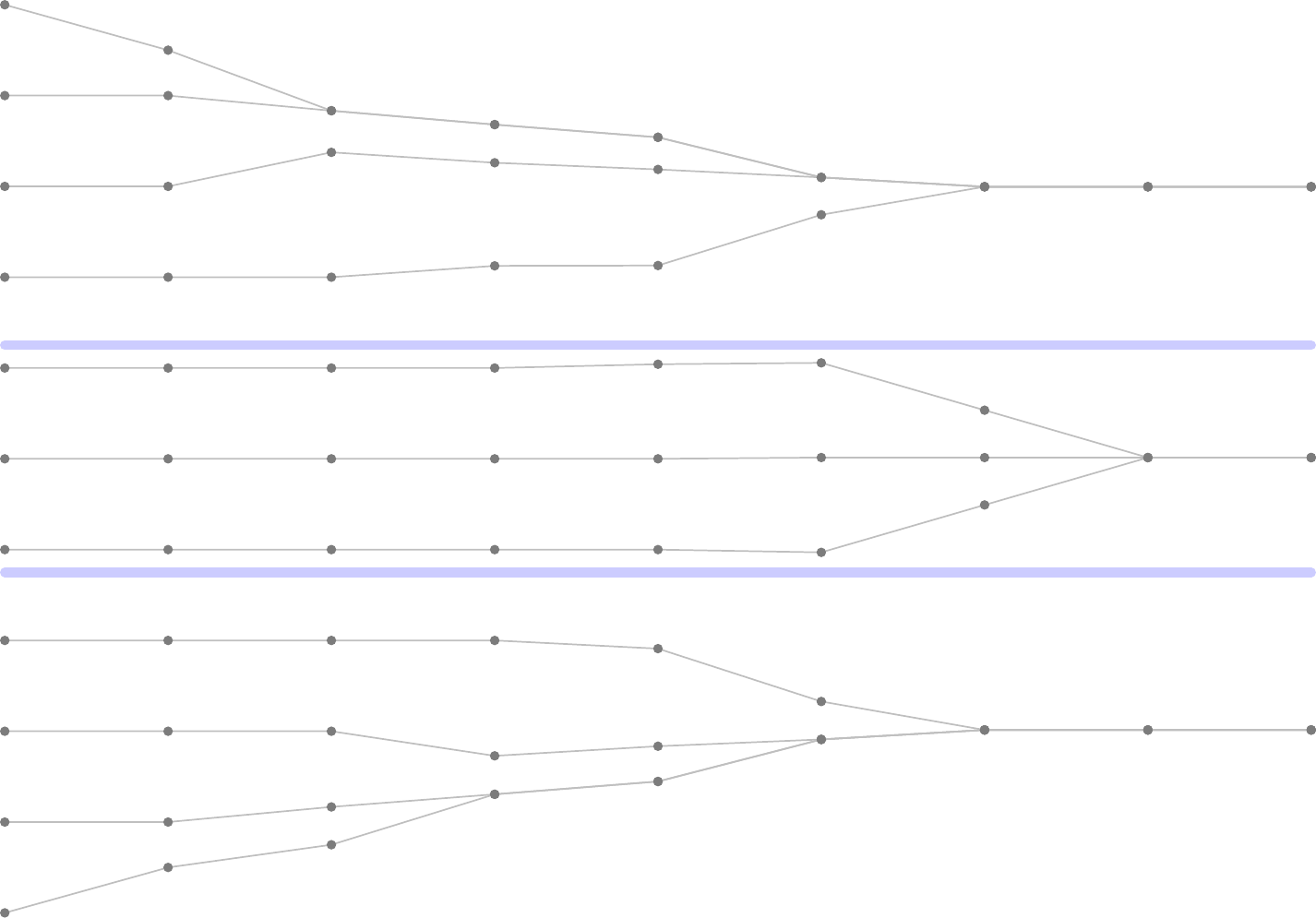}
  \caption{The dynamics in Example \ref{ex_1}.}
  \label{fig:example-5}
\end{figure}

\begin{figure}[!ht]
  \begin{center}
    \setlength{\unitlength}{1.0cm}
    \begin{picture}(8,2.6)
      \put(0.4722,1){\circle{0.3}}
      \put(0.4722,1.4){\circle{0.3}}
      \put(0.4722,1.8){\circle{0.3}}
      \put(2,1){\circle{0.3}}
      \put(4,1){\circle{0.3}}
      \put(4,1.4){\circle{0.3}}
      \put(4,1.8){\circle{0.3}}
      \put(4,2.2){\circle{0.3}}
      \put(6,1){\circle{0.3}}
      \put(7.5277,1){\circle{0.3}}
      \put(7.5277,1.4){\circle{0.3}}
      \put(7.5277,1.8){\circle{0.3}}
      \put(-1,0.5){\line(1,0){10}}
      \multiput(0,0)(1,0){9}{\put(0,0.4){\line(0,1){0.2}}}
      \put(-0.2,0){$0\varepsilon$}
      \put(0.8,0){$1\varepsilon$}
      \put(1.8,0){$2\varepsilon$}
      \put(2.8,0){$3\varepsilon$}
      \put(3.8,0){$4\varepsilon$}
      \put(4.8,0){$5\varepsilon$}
      \put(5.8,0){$6\varepsilon$}
      \put(6.8,0){$7\varepsilon$}
      \put(7.8,0){$8\varepsilon$}
    \end{picture}
    \caption{Final positions of the individuals in Example \ref{ex_1}.}
    \label{fig_ex_1_2}
  \end{center}
\end{figure}
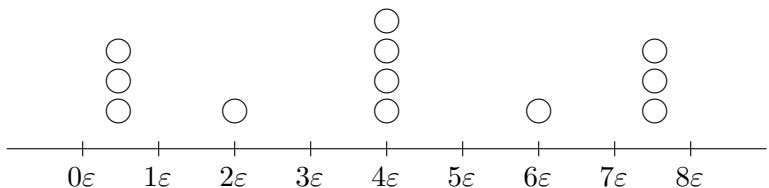

We remark that for symmetric weights $\beta_{ij}(t)=\beta_{ji}(t)$ one
can easily show that in the absence of truth seekers the dynamics
becomes stable after a finite number of time steps. In the case of
asymmetric weights $\beta_{ij}(t)\neq\beta_{ji}(t)$ we only have
convergence, but need not reach a stable state after an arbitrary,
problem dependent, but finite number of time steps, as illustrated in
the following example.

\begin{example}
  \label{ex_2}
  Consider a (WASBOCOS) with $\alpha_i(t)=0$ (no truth seekers),
  $n=2$, $x_1(0)=0$, $x_2(0)=\varepsilon$,
  $\beta_{11}(t)=\frac{2}{3}$, $\beta_{12}(t)=\frac{1}{3} = \beta$,
  $\beta_{21}(t)=\frac{1}{2}$, $\beta_{22}(t)=\frac{1}{2}$; the values
  of $\alpha$ and~$h$ do not matter.

  \medskip

  One can easily verify, e.g.{}, by induction, that we have
  $$
    x_1(t)=\left(\frac{2}{5}-\frac{2}{5\cdot 6^t}\right)\cdot\varepsilon\text{ and }
    x_2(t)=\left(\frac{2}{5}+\frac{3}{5\cdot 6^t}\right)\cdot\varepsilon
  $$
  for all $t\in\mathbb{N}$. So we have $|x_1(t)-x_2(t)|=\frac{1}{6^t}\cdot\varepsilon>0$ but
  clearly the opinions of the two individuals converge to $\frac{2}{5}$.
\end{example}
All stated insights with the absence of truth seekers were known so
far. It becomes a bit more interesting if we allow truth seekers,
i.e.{}, if we consider a general (WASBOCOS).
\begin{example}
  \label{ex_3}
  Consider a (WASBOCOS) with $\alpha_1(t)=\alpha$, $\alpha_i(t)=0$ for $i\neq 1$, $\beta_{ij}(t)=\frac{1}{n}$,
  $h=\frac{1}{2}\varepsilon$, $x_1(0)=0$, and $x_i(0)=\varepsilon$ for $i\neq 1$. The opinion $u_t$ of the
  truth seeker $1$ at time 
  $t$ and the opinion $v_t$ of the other ignorants at time $t>0$ are given by
  \begin{eqnarray*}
    u_t&=&\left[\frac{1}{2}-\alpha
    \left(\frac{1}{2}-\frac{1}{n}\right)\left(1-\frac{\alpha}{n}\right)^{t-1}\right]\varepsilon,\\
    v_t&=&\left[\frac{1}{2}+
    \left(\frac{1}{2}-\frac{1}{n}\right)\left(1-\frac{\alpha}{n}\right)^{t-1}\right]\varepsilon
  \end{eqnarray*}
  respectively. This can be verified, e.g.{}, by induction. We see
  that the opinions of the truth seekers, and here also those of the
  ignorants, converge to the truth $h=\frac{1}{2}\varepsilon$.

  Note the opinions of ignorants may in general fail to converge to
  the truth as one can see by adding some further ignorants with
  $\tilde{x}_i(0)=3\varepsilon$.
\end{example}
As our analytical investigation of the previous example was rather
technical, we also depict the situation for special values $n=6$ and
$\alpha=\frac{2}{3}$ in Figure \ref{fig_ex_3}.  We sketch the truth
seeker by a filled circle and the ignorants by an empty circle.

\begin{figure}[h]
  \begin{center}
    \includegraphics[width=0.65\linewidth]{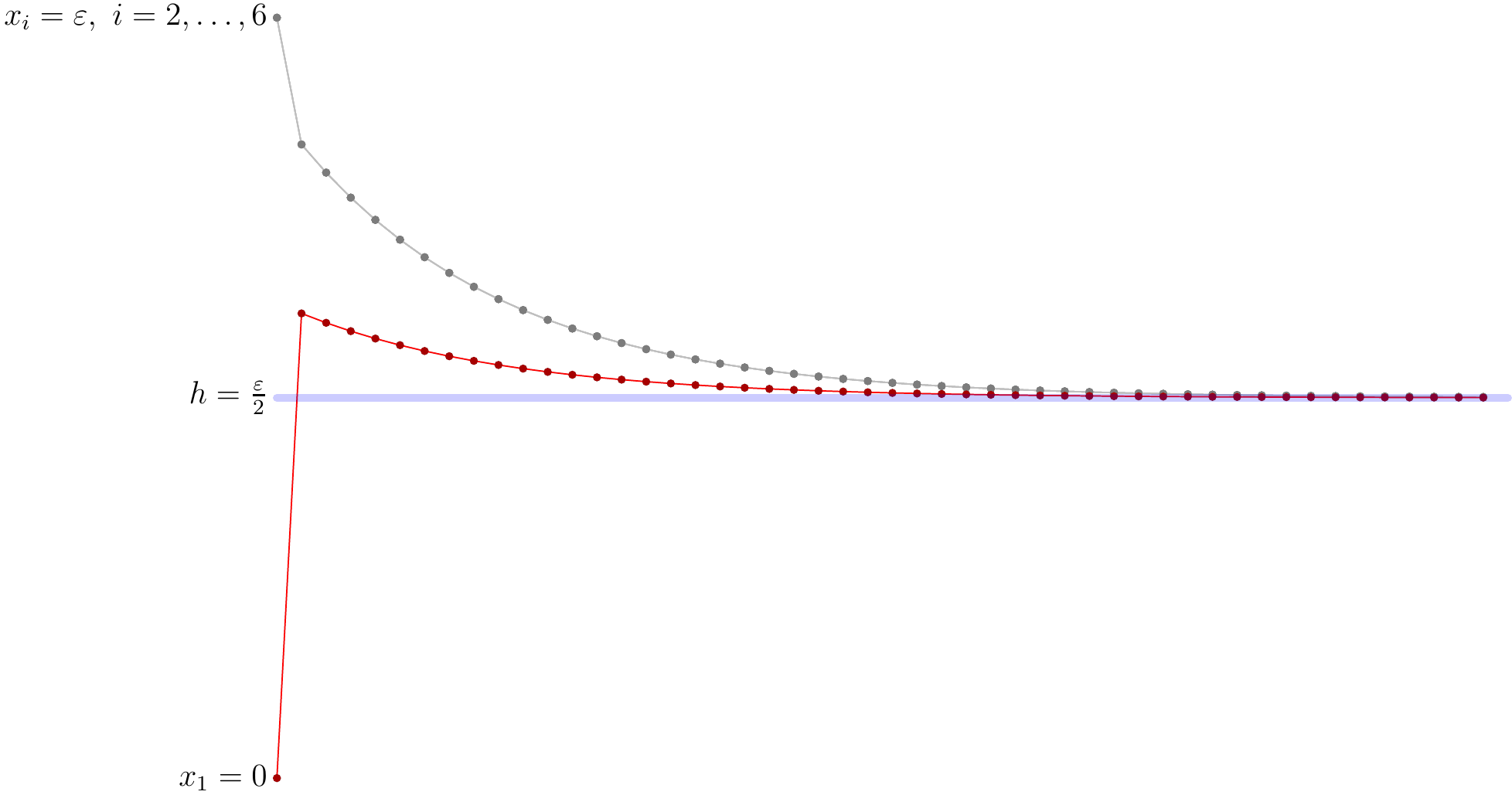}
    \caption{The dynamics in example \ref{ex_3}.}
    \label{fig_ex_3}
  \end{center}
\end{figure}

One can easily imagine more complicated configurations as in Example
\ref{ex_3} where one has little chance and willingness to describe the
situation analytically. Our result Theorem \ref{main_result} states
that -- whatever the parameters of a (WASBOCOS) are -- the opinions of the
truth seekers converge to the truth. This settles an open conjecture
of Hegselmann and Krause.


\section{The crucial objects}

To get a first impression of what we may expect in terms of
convergence we consider a lonely truth seeker, i.e.{}, $n=1$.

\begin{lemma}
  \label{lemma_lonely}
  For a lonely truth seeker $i=1$ we have 
  $$
    |x_i(t+r)-h|\le |x_i(t)-h|\cdot(1-\alpha)^r.
  $$
\end{lemma}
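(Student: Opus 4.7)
The plan is to exploit the drastic simplification that occurs when $n=1$: the confidence set $I_1^\varepsilon(t)$ necessarily contains only the single index $1$, regardless of $\varepsilon$, so the weighted average appearing in the update rule~\eqref{eq_update} collapses to $x_1(t)$ itself (the sum in numerator and denominator each have one term, namely $\beta_{11}(t)x_1(t)$ and $\beta_{11}(t)$). Hence the update equation degenerates to the one-dimensional affine recurrence
\begin{equation*}
  x_1(t+1) = \alpha_1(t)\cdot h + \bigl(1-\alpha_1(t)\bigr)\cdot x_1(t).
\end{equation*}

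From here I would first prove the base case $r=1$. Subtracting $h$ on both sides yields
\begin{equation*}
  x_1(t+1) - h = \bigl(1-\alpha_1(t)\bigr)\bigl(x_1(t) - h\bigr),
\end{equation*}
and because individual~$1$ is a truth seeker we have $\alpha_1(t)\in[\alpha,1]$, so $0\le 1-\alpha_1(t)\le 1-\alpha$. Taking absolute values gives $|x_1(t+1)-h|\le(1-\alpha)\,|x_1(t)-h|$.

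The general statement then follows by a straightforward induction on $r$: assuming the bound for $r$, apply the base case to the time $t+r$ to pick up an additional factor of $(1-\alpha)$, yielding $|x_1(t+r+1)-h|\le(1-\alpha)^{r+1}|x_1(t)-h|$. There is no real obstacle here; the only thing worth pausing on is verifying that the confidence set is indeed $\{1\}$ (so that no case analysis on $\varepsilon$ is needed) and that the convention $\alpha_1(t)\ge\alpha$ for truth seekers gives the uniform contraction factor $1-\alpha$ independent of $t$. This is exactly why the lemma provides a clean exponential convergence rate in the lonely-truth-seeker scenario, in stark contrast to the much more delicate non-monotone behavior illustrated in Examples~\ref{ex_interupted_convergent} and~\ref{ex_3}.
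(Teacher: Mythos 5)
Your proof is correct and follows essentially the same route as the paper's: the paper's one-line proof is exactly your base case $|x_1(t+1)-h|=(1-\alpha_1(t))|x_1(t)-h|\le(1-\alpha)|x_1(t)-h|$, with the collapse of the confidence set to $\{1\}$ and the iteration over $r$ left implicit. You have merely spelled out those implicit steps explicitly, which is fine.
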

\begin{proof}
  $$
    |x_i(t+1)-h|=|x_i(t)-h|\cdot(1-\alpha_i(t))\le |x_i(t)-h|\cdot(1-\alpha).
  $$
\end{proof}

Clearly this bound is tight. Similar to this very special situation of
a lonely truth seeker is the case $\varepsilon=0$, so that we now
assume $\varepsilon>0$ for the remaining part of this article.

In order to describe the states of the discrete time dynamical system
with more than one truth seeker, we look at the truth seekers with the
most extreme opinions.

\begin{definition}
  \label{def_lower_upper}
  We define $\tilde{u}(t)\in K$ as the lexicographically smallest
  truth seeker which fullfills $x_{\tilde{u}(t)}(t)\ge h$ and
  $x_{\tilde{u}(t)}(t)\ge x_k(t)$ for all $k\in K$. If there is no
  truth seeker with opinion greater or equal to the truth $h$ we set
  $\tilde{u}(t)=0$.  In order to avoid case distinctions, we define
  $x_0(t'):=h$ for all $t'\in\mathbb{N}$. Similar we define
  $\tilde{l}(t)$ as the lexicographically smallest truth seeker that
  fullfills $x_{\tilde{l}(t)}(t)\le h$ and $x_{\tilde{l}(t)}(t)\le
  x_k(t)$ for all $k\in K$. Again, we set $\tilde{l}(t)=0$ if
  there is no such truth seeker.
\end{definition}
Due to the \emph{symmetrical} -- one could say \textit{fair} --
definition of the confidence set, the confidence structure between the
individuals can be described as a simple graph with loops.
\begin{definition}
  The \emph{confidence graph} $\mathcal{G}(t)$ with vertex set $V(t)$
  and edge set $E(t)$, of a configuration $x(t)=(x_1(t) , \dots ,
  x_n(t))\in\mathbb{R}^n$ and the additional is defined as follows:
  \begin{eqnarray*}
    V(t):=[n]\cup\{0\},\\
    E(t):=\{\{i,j\} \in \tbinom{V}{2} \mid |x_i(t)-x_j(t)|\le\varepsilon\}.
  \end{eqnarray*}
  For $i\in V(t)$ let $C_i(t)$ be the set of vertices in the
  connectivity component of vertex $i$ in $\mathcal{G}(t)$.
\end{definition}

Because we want to keep track of the individuals which can influence
the truth seekers in the future, we give a further definition for
individuals, which is similar to Definition \ref{def_lower_upper} for
truth seekers.

\begin{definition}
  \label{def_extreme_individuals}
  We define $\hat{u}(t)\in C_{\tilde{u}(t)}(t)$ as the
  lexicographically smallest individual with $x_{\hat{u}(t)}(t)\ge
  x_c(t)\,\,\forall c\in C_{\tilde{u}(t)}(t)$ and $\hat{l}(t)\in
  C_{\tilde{l}(t)}(t)$ as the lexicographically smallest individual
  with $x_{\hat{l}(t)}(t)\le x_c(t)\,\,\forall c\in
  C_{\tilde{l}(t)}(t)$ for all $t\in\mathbb{N}$.
\end{definition}

The opinions of $\hat{u}(t)$ and $\hat{l}(t)$ form an interval
$[x_{\hat{l}(t)}(t),x_{\hat{u}(t)}(t)]$ called the \emph{hope
  interval} which is crucial for our further investigations. To prove
the main theorem we will show that the length of this hope interval
converges to zero.

\medskip

In Figure \ref{fig_def_borders}, we have depicted a configuration to
illustrate Definition \ref{def_lower_upper} and Definition
\ref{def_extreme_individuals}. In particular, we have $\tilde{l}=4$,
$\tilde{u}=9$, $\hat{l}=2$, and $\hat{u}=12$. Individual~$1$ is
\textit{lost} and not contained in the hope interval, because there is
no path in $\mathcal{G}$ from $1$ to $\tilde{l}=4$. So we already know
that the opinion of Individual~$1$ will not converge to the truth.

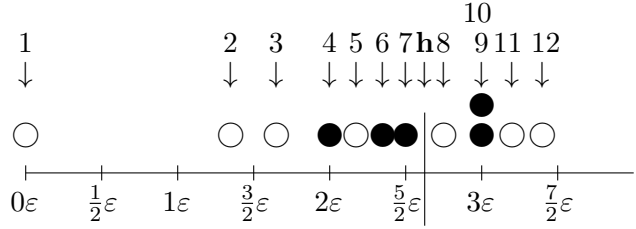
\begin{figure}[htp]
  \begin{center}
    \setlength{\unitlength}{1.0cm}
    \begin{picture}(6,3.0)
      \put(-1,1){\circle{0.3}}
      \put(1.7,1){\circle{0.3}}
      \put(2.3,1){\circle{0.3}}
      \put(3,1){\circle*{0.3}}
      \put(3.35,1){\circle{0.3}}
      \put(3.7,1){\circle*{0.3}}
      \put(4,1){\circle*{0.3}}
      \put(4.5,1){\circle{0.3}}
      \put(5,1){\circle*{0.3}}
      \put(5,1.4){\circle*{0.3}}
      \put(5.4,1){\circle{0.3}}
      \put(5.8,1){\circle{0.3}}
      \put(-1.1,1.7){$\downarrow$}
      \put(-1.1,2.1){$1$}
      \put(1.6,1.7){$\downarrow$}
      \put(1.6,2.1){$2$}
      \put(2.2,1.7){$\downarrow$}
      \put(2.2,2.1){$3$}
      \put(2.9,1.7){$\downarrow$}
      \put(2.9,2.1){$4$}
      \put(3.25,1.7){$\downarrow$}
      \put(3.25,2.1){$5$}
      \put(3.6,1.7){$\downarrow$}
      \put(3.6,2.1){$6$}
      \put(3.9,1.7){$\downarrow$}
      \put(3.9,2.1){$7$}
      \put(4.4,1.7){$\downarrow$}
      \put(4.4,2.1){$8$}
      \put(4.9,1.7){$\downarrow$}
      \put(4.9,2.1){$9$}
      \put(4.75,2.5){$10$}
      \put(5.3,1.7){$\downarrow$}
      \put(5.15,2.1){$11$}
      \put(5.7,1.7){$\downarrow$}
      \put(5.65,2.1){$12$}
      \put(-1,0.5){\line(1,0){8}}
      \multiput(0,0)(1,0){8}{\put(-1,0.4){\line(0,1){0.2}}}
      \put(-1.2,0){$0\varepsilon$}
      \put(-0.2,0){$\frac{1}{2}\varepsilon$}
      \put(0.8,0){$1\varepsilon$}
      \put(1.8,0){$\frac{3}{2}\varepsilon$}
      \put(4.25,-0.2){\line(0,1){1.5}}
      \put(4.15,1.7){$\downarrow$}
      \put(4.13,2.1){$\mathbf{h}$}
      \put(2.8,0){$2\varepsilon$}
      \put(3.8,0){$\frac{5}{2}\varepsilon$}
      \put(4.8,0){$3\varepsilon$}
      \put(5.8,0){$\frac{7}{2}\varepsilon$}
    \end{picture}
    \caption{Illustration of Definition \ref{def_lower_upper} and Definition \ref{def_extreme_individuals}.}
    \label{fig_def_borders}
  \end{center}
\end{figure}

In the configuration depicted in Figure \ref{fig_def_borders_2} we have $\tilde{l}=2$,
$\tilde{u}=0$, $\hat{l}=2$, and $\hat{u}=5$.

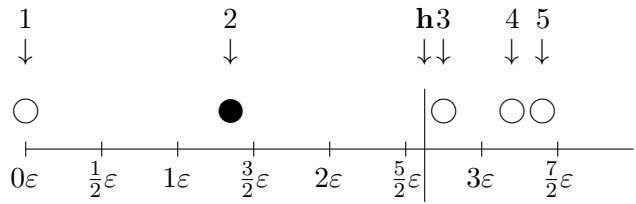
\begin{figure}[htp]
  \begin{center}
    \setlength{\unitlength}{1.0cm}
    \begin{picture}(6,3.0)
      \put(-1,1){\circle{0.3}}
      \put(1.7,1){\circle*{0.3}}
      \put(4.5,1){\circle{0.3}}
      \put(5.4,1){\circle{0.3}}
      \put(5.8,1){\circle{0.3}}
      \put(-1.1,1.7){$\downarrow$}
      \put(-1.1,2.1){$1$}
      \put(1.6,1.7){$\downarrow$}
      \put(1.6,2.1){$2$}
      \put(4.4,1.7){$\downarrow$}
      \put(4.4,2.1){$3$}
      \put(5.3,1.7){$\downarrow$}
      \put(5.31,2.1){$4$}
      \put(5.7,1.7){$\downarrow$}
      \put(5.71,2.1){$5$}
      \put(-1,0.5){\line(1,0){8}}
      \multiput(0,0)(1,0){8}{\put(-1,0.4){\line(0,1){0.2}}}
      \put(-1.2,0){$0\varepsilon$}
      \put(-0.2,0){$\frac{1}{2}\varepsilon$}
      \put(0.8,0){$1\varepsilon$}
      \put(1.8,0){$\frac{3}{2}\varepsilon$}
      \put(4.25,-0.2){\line(0,1){1.5}}
      \put(4.15,1.7){$\downarrow$}
      \put(4.13,2.1){$\mathbf{h}$}
      \put(2.8,0){$2\varepsilon$}
      \put(3.8,0){$\frac{5}{2}\varepsilon$}
      \put(4.8,0){$3\varepsilon$}
      \put(5.8,0){$\frac{7}{2}\varepsilon$}
    \end{picture}
    \caption{Illustration of a special case in Definition \ref{def_lower_upper}.}
    \label{fig_def_borders_2}
  \end{center}
\end{figure}

Note that the weights $\beta_{ij}$ may be assymmetric.  Thus, the
sequence of the opinions of the individuals may reorder during the
time steps. As an example, consider, e.g.{}, three ignorants with
starting positions $x_1(0)=1\varepsilon$,
$x_2(0)=\frac{3}{2}\varepsilon$, and $x_3(0)=2\varepsilon$. The
weights may be given as $\beta_{11}(0)=0.01$, $\beta_{12}(0)=0.01$,
$\beta_{13}(0)=0.98$, $\beta_{21}(0)=0.98$, $\beta_{22}(0)=0.01$,
$\beta_{23}(0)=0.01$, $\beta_{31}(0)=0.4$, $\beta_{32}(0)=0.4$, and
$\beta_{33}(0)=0.2$. After one time step the new opinions are given by
$x_1(1)=1.985\varepsilon$, $x_2(1)=1.015\varepsilon$, and
$x_3(1)=1.4\varepsilon$. We remark that it is possible to achieve
every ordering of the three opinions in one time step by choosing
suitable weights $\beta_{ij}$ in this example. Nevertheless we have
the following straight-forward lemma:

\begin{lemma}
  \label{lemma_interval_1}
  Let $i$ be an ignorant, $l\in I_i^\varepsilon(t)$ be an individual
  with smallest opinion and $u\in I_i^\varepsilon(t)$ be an individual
  with largest opinion then we have $x_i(t+1)\in[x_l(t),x_u(t)]$.
\end{lemma}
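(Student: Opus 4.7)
The plan is to exploit the fact that $i$ is an ignorant, which by definition means $\alpha_i(t) = 0$ for every $t$. Plugging this into the update equation (\ref{eq_update}), the truth-attraction term vanishes entirely and $x_i(t+1)$ reduces to the pure weighted arithmetic mean
$$
x_i(t+1) = \frac{\sum_{j \in I_i^\varepsilon(t)} \beta_{ij}(t)\,x_j(t)}{\sum_{j \in I_i^\varepsilon(t)} \beta_{ij}(t)}
$$
of the opinions of those individuals currently in $i$'s confidence set.

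From there, I would invoke the elementary fact that a weighted arithmetic mean of finitely many real numbers with strictly positive weights lies between the minimum and maximum of those numbers. Concretely, by the choice of $l$ and $u$ we have $x_l(t) \le x_j(t) \le x_u(t)$ for every $j \in I_i^\varepsilon(t)$. Since $\beta_{ij}(t) \ge \beta > 0$, all weights are positive, so multiplying each inequality by $\beta_{ij}(t)$ and summing over $j \in I_i^\varepsilon(t)$ yields
$$
x_l(t)\sum_{j \in I_i^\varepsilon(t)} \beta_{ij}(t) \;\le\; \sum_{j \in I_i^\varepsilon(t)} \beta_{ij}(t)\,x_j(t) \;\le\; x_u(t)\sum_{j \in I_i^\varepsilon(t)} \beta_{ij}(t).
$$
Dividing by the common strictly positive normalizing constant gives $x_l(t) \le x_i(t+1) \le x_u(t)$, as desired.

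There is essentially no obstacle here; the only thing to keep in mind is that the lemma applies specifically to ignorants, so the $\alpha_i(t)\cdot h$ summand can be dropped without any case distinction. The positivity of the $\beta_{ij}(t)$'s (guaranteed by the uniform lower bound $\beta > 0$) is what makes the division legal and ensures that $i \in I_i^\varepsilon(t)$ is not the only contributor; note further that $i$ itself lies in $I_i^\varepsilon(t)$, so the set is always nonempty. This is why the lemma is labelled as straight-forward.
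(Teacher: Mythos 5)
Your proof is correct and is exactly the argument the paper intends: its own proof simply states that the lemma ``follows directly from the system dynamics in Equation (\ref{eq_update})'', and you have filled in the routine details (with $\alpha_i(t)=0$ the update is a weighted mean with positive weights, hence lies between the extremes of the averaged values). No issues beyond a harmless slip in the last paragraph -- positivity of the weights does not ensure that $i$ is not the only contributor, but the bound holds trivially in that degenerate case anyway.
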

\begin{proof}
  This follows directly from the system dynamics in Equation
  (\ref{eq_update}).
\end{proof}
For truth seekers we have a similar lemma:
\begin{lemma}
  \label{lemma_interval_2}
  Let $i$ be a truth seeker, $l\in I_i^\varepsilon(t)$ be an
  individual with smallest opinion, and let $u\in I_i^\varepsilon(t)$
  be an individual with largest opinion. For $x_i(t)\le h$ we have
  $x_i(t+1)\in[x_l(t),\max(h,x_u(t))]$ and for $x_i(t)\ge h$ we have
  $x_i(t+1)\in[\min(h,x_l(t)),x_u(t)]$.
\end{lemma}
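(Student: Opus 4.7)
The plan is to show that the update in Equation~(\ref{eq_update}) is, for a truth seeker, simply a nested convex combination, and then to read off the claimed interval bounds by a short case analysis depending on the position of $x_i(t)$ relative to~$h$.

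First I would introduce the shorthand
\[
  M_i(t) := \frac{\sum_{j\in I_i^\varepsilon(t)}\beta_{ij}(t)x_j(t)}{\sum_{j\in I_i^\varepsilon(t)}\beta_{ij}(t)},
\]
and note that $M_i(t)$ is a convex combination of the values $\{x_j(t): j\in I_i^\varepsilon(t)\}$, so $M_i(t)\in[x_l(t),x_u(t)]$. Then Equation~(\ref{eq_update}) reads $x_i(t+1)=\alpha_i(t)\,h+(1-\alpha_i(t))M_i(t)$ with $\alpha_i(t)\in[0,1]$, which is a convex combination of $h$ and $M_i(t)$; hence
\[
  x_i(t+1)\in[\min(h,M_i(t)),\,\max(h,M_i(t))]\subseteq[\min(h,x_l(t)),\,\max(h,x_u(t))].
\]

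Next I would use the trivial but crucial fact that $i\in I_i^\varepsilon(t)$, so $x_l(t)\le x_i(t)\le x_u(t)$. In the case $x_i(t)\le h$ this gives $x_l(t)\le h$, so $\min(h,x_l(t))=x_l(t)$, and the displayed inclusion collapses to $x_i(t+1)\in[x_l(t),\max(h,x_u(t))]$, which is exactly the first claim. The case $x_i(t)\ge h$ is entirely symmetric: now $x_u(t)\ge h$, so $\max(h,x_u(t))=x_u(t)$, and the inclusion collapses to $x_i(t+1)\in[\min(h,x_l(t)),x_u(t)]$.

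There is essentially no obstacle here; the only subtle point is that one cannot simply replace $\max(h,x_u(t))$ by $x_u(t)$ (or $\min(h,x_l(t))$ by $x_l(t)$) in the respective cases, because the truth $h$ may lie outside the range of opinions seen by~$i$, in which case the attraction toward~$h$ genuinely pushes $x_i(t+1)$ past $x_u(t)$ (resp.\ below $x_l(t)$). Recording the correct one-sided relaxation is what makes the statement both true and tight, so I would make sure the write-up emphasizes why only the side ``away from $h$'' needs the $\max$ or $\min$ with~$h$.
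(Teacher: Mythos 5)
Your proof is correct, and it is exactly the argument the paper has in mind: the paper states this lemma without proof (the companion Lemma~\ref{lemma_interval_1} is dismissed with ``follows directly from the system dynamics''), and your write-up simply makes explicit the nested convex-combination structure of Equation~(\ref{eq_update}) together with the observation that $i\in I_i^\varepsilon(t)$ forces $x_l(t)\le h$ (resp.\ $x_u(t)\ge h$) in the relevant case. No gaps.
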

Our goal is to prove that the length of the hope interval converges to
zero. To this end, we show first that the length does not increase
after an iteration of Equation (\ref{eq_update}).
\begin{lemma}
  \label{lemma_decreasing}
  For all time steps $t\in\mathbb{N}$ we have $x_{\hat{u}(t+1)}(t+1)\le x_{\hat{u}(t)}(t)$ and
  $x_{\hat{l}(t+1)}(t+1)\ge x_{\hat{l}(t)}(t)$.
\end{lemma}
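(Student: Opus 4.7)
The plan is to exploit the one-dimensional structure of $\mathcal{G}(t)$: since opinions live in $\mathbb{R}$, the connected components of $\mathcal{G}(t)$ are disjoint intervals in opinion space, and any two distinct components are separated by a gap strictly greater than $\varepsilon$. Set $U_t := x_{\hat{u}(t)}(t)$ and note $U_t \ge h$ (if $\tilde{u}(t) \ne 0$ then $U_t \ge x_{\tilde{u}(t)}(t) \ge h$; if $\tilde{u}(t) = 0$ then vertex~$0$ itself lies in $C_{\tilde{u}(t)}(t)$, so $U_t \ge h$).

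First I would classify the components of $\mathcal{G}(t)$ relative to $C_{\tilde{u}(t)}(t)$: a component is called \emph{above} if its minimum opinion exceeds $U_t$, in which case the gap property forces this minimum to exceed $U_t + \varepsilon$. The crucial structural observation is that every above component consists entirely of ignorants. Indeed, a truth seeker $k$ in an above component would satisfy $x_k(t) > U_t \ge h$, contradicting the fact that $x_{\tilde{u}(t)}(t)$ is the maximum opinion among truth seekers with opinion $\ge h$ (in the boundary case $\tilde{u}(t) = 0$, no truth seeker with opinion $\ge h$ exists at all).

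Next I would bound all opinions at time $t+1$. For $i$ in $C_{\tilde{u}(t)}(t)$ or in any non-above component, Lemmas~\ref{lemma_interval_1} and~\ref{lemma_interval_2}, combined with $h \le U_t$ and the fact that $I_i^\varepsilon(t) \subseteq C_i(t)$, yield $x_i(t+1) \le U_t$. For an ignorant $i$ in an above component $C$, Lemma~\ref{lemma_interval_1} gives $x_i(t+1) \ge \min_{j \in C} x_j(t) > U_t + \varepsilon$. Hence every opinion at time $t+1$ lies in $[0, U_t] \cup (U_t + \varepsilon, 1]$, so no edge of $\mathcal{G}(t+1)$ can connect an index with opinion $\le U_t$ to one with opinion $> U_t + \varepsilon$.

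Finally, since above components contain no truth seekers, $\tilde{u}(t+1)$ is either vertex~$0$ (whose opinion is $h \le U_t$) or a truth seeker that at time~$t$ lay in $C_{\tilde{u}(t)}(t)$ or in a non-above component; in either case the previous paragraph gives $x_{\tilde{u}(t+1)}(t+1) \le U_t$. The cut at $U_t$ in $\mathcal{G}(t+1)$ then confines the whole component $C_{\tilde{u}(t+1)}(t+1)$ to opinions $\le U_t$, and we conclude $x_{\hat{u}(t+1)}(t+1) \le U_t$. The symmetric argument, reflecting the roles of $\min$ and $\max$ and of $\tilde{l}$ and $\tilde{u}$, yields the dual inequality $x_{\hat{l}(t+1)}(t+1) \ge x_{\hat{l}(t)}(t)$. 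The main subtlety lies in the classification step: the pointwise bounds of Lemmas~\ref{lemma_interval_1}/\ref{lemma_interval_2} alone would leave open the possibility that a truth seeker near a high cluster gets dragged toward $h$ and thereby fuses the new component of $\tilde{u}(t+1)$ with that high cluster; ruling this out requires both the 1D geometry and the maximality built into the definition of $\tilde{u}(t)$.
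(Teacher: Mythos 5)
Your proposal is correct and follows essentially the same route as the paper: both arguments cut the population at the extreme of the hope interval, observe that everything beyond the cut contains no truth seekers and is separated by a gap exceeding $\varepsilon$ (so the two groups cannot influence each other in one step), and then apply Lemmas~\ref{lemma_interval_1} and~\ref{lemma_interval_2} to see that the gap persists and the new extreme component stays on the right side. Your write-up is somewhat more explicit about the component/interval structure of $\mathcal{G}(t)$ and about why $I_i^\varepsilon(t)$ cannot cross the gap, which the paper leaves implicit, but the underlying idea is the same.
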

\begin{proof}
  We only prove the last inequality since the proof is symmetric for
  the first inequality. Due to Definition
  \ref{def_extreme_individuals}, we have $x_{\hat{l}(t+1)}(t+1)\le h$
  and $x_{\hat{l}(t)}(t)\le h$. By $\mathcal{L}(t)$ we denote the set
  of individuals with opinion strictly smaller than
  $x_{\hat{l}(t)}(t)$.  That is, $\mathcal{L}(t'):=\{i\in[n]\mid
  x_i(t')<x_{\hat{l}(t')}(t')\}$ for all $t'\ge t$. We remark that, by
  definition, $\mathcal{L}(t')$ does not contain a truth seeker. We
  set $\mathcal{U}(t'):=[n]\backslash\mathcal{L}(t')$; this set
  contains the remaining individuals.

  \medskip

  Let $u$ be an individual in $\mathcal{L}(t)$ with the largest
  opinion. By applying Lemma \ref{lemma_interval_1} we get
  $x_i(t+1)\le x_u(t)$ for all $i\in\mathcal{L}(t)$. Now let $l$
  (e.g.{}, $l=\hat{l}(t)$) be an individual in $\mathcal{U}(t)$ with
  smallest opinion then by applying Lemma \ref{lemma_interval_1} and
  Lemma \ref{lemma_interval_2} we receive $x_i(t+1)\ge x_l(t)$ for all
  $i\in\mathcal{U}(t)$. Thus, we have $\hat{l}(t+1)\in\mathcal{U}(t)$
  and so $x_{\hat{l}(t+1)}(t+1)\ge x_{\hat{l}(t)}(t)$ follows.
\end{proof}

In the remaining part of this article we prove that the length of the
hope interval $|x_{\hat{u}(t)}(t)-x_{\hat{l}(t)}(t)|$ converges (in
some special sense) to zero, as $t$ tends to infinity.


\section{Proof of the Generalized Hegselmann-Krause Conjecture}

One difficulty in the proof arises from the fact that convergence
happens in two phases: in a first phase, the hope interval becomes
sufficiently small so that the confidence graph is the complete graph.
Then, it may happen that truth seekers approaching the truth from one
side get distracted to the other side of the truth.  At that point,
however, the confidence structure is so simple that all individuals in
the hope interval converge to the truth.  Since all truth seekers are
in the hope interval at all times, this proves the theorem.  Where
exactly we split the phases is a technical decisison.

First, we show that after a finite number $T_1$ of time steps,
depending only on $n$, $\varepsilon$, $\alpha$, and $\beta$, the hope
interval $[x_{\hat{l}(T_1)}(T_1),x_{\hat{u}(T_1)}(T_1)]$ is contained
in the interval
$[h-\varepsilon-\frac{\varepsilon\alpha\beta}{12},h+\varepsilon+\frac{\varepsilon\alpha\beta}{12}]$. Therefore,
we introduce the following notion.
\begin{definition}
  A \emph{good iteration} is an iteration where for $1\le r\le 3$
  one of the following conditions is fullfilled:
  \begin{enumerate}
    \item[(1)] the number of individuals in the hope interval decreases,
    \item[(2)] the opinion of $\hat{l}(t+r)$ reaches or passes $h-\varepsilon-\frac{\varepsilon\alpha\beta}{12}$,
    \item[(3)] the opinion of  $\hat{u}(t+r)$ reaches or passes $h+\varepsilon+\frac{\varepsilon\alpha\beta}{12}$,
    \item[(4)] $|x_{\hat{u}(t+r)}(t+r)-x_{\hat{u}(t)}(t)|\ge\frac{\varepsilon\alpha\beta^2}{12}$,
    \item[(5)] $|x_{\hat{l}(t+r)}(t+r)-x_{\hat{l}(t)}(t)|\ge\frac{\varepsilon\alpha\beta^2}{12}$.
  \end{enumerate}
\end{definition}

Clearly, there is only a finite number of good iterations. We may
choose $T_1=3\cdot\left(n+2\cdot
  1+2\cdot\frac{12}{\varepsilon\alpha\beta^2}\right)$. We formulate
the next two lemmas only for the lower bound $x_{\hat{l}(t)}(t)$
because analog arguments hold for $x_{\hat{u}(t)}(t)$. As a shorthand
we define $d(i,j,t):=\left|x_i(t)-x_j(t)\right|$. For each point in
time $t$ we define the sets
\begin{eqnarray*}
  &&\mathcal{N}(t):=\left\{i\in[n]\mid d(\hat{l}(t),i,t)\in\Big[0,\frac{\varepsilon\alpha\beta}{12}
  \Big)\right\},\\
  &&\mathcal{M}(t):=\left\{i\in[n]\mid d(\hat{l}(t),i,t)\in\Big[\frac{\varepsilon\alpha\beta}{12},
  \varepsilon\Big]\right\},
  \text{ and}\\
  &&\mathcal{F}(t):=\left\{i\in[n]\mid x_i(t)-x_{\hat{l}(t)}(t)>\varepsilon\right\}.
\end{eqnarray*}

\begin{lemma}
  \label{lemma_middle}
  If $\mathcal{M}(t)\neq\emptyset$ then there is a good iteration after $1$ step.
\end{lemma}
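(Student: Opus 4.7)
The plan is to establish condition~(5) of the good-iteration definition at $r=1$ -- that $\hat{l}$ moves up by at least $\varepsilon\alpha\beta^2/12$ in a single step -- unless an easier condition already applies. Write $L := x_{\hat{l}(t)}(t)$. As a preliminary reduction, if $L \ge h - \varepsilon - \varepsilon\alpha\beta/12$ then Lemma~\ref{lemma_decreasing} immediately gives $x_{\hat{l}(t+1)}(t+1) \ge L \ge h - \varepsilon - \varepsilon\alpha\beta/12$, so condition~(2) already holds at $r=1$. I therefore assume $L < h - \varepsilon - \varepsilon\alpha\beta/12$, which in particular yields $h - L > \varepsilon\alpha\beta^2/12$.

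The core of the proof is a one-step lower bound on the update of the individual $l := \hat{l}(t)$ itself. Pick any $m \in \mathcal{M}(t)$; since $|x_m(t) - L| \le \varepsilon$, $m$ lies in $I_l^\varepsilon(t)$ and, by connectivity, in $C_{\tilde{l}(t)}(t)$. Because $L$ is the minimum opinion in that component, every $j \in I_l^\varepsilon(t)$ satisfies $x_j(t) \ge L$. Using $\beta_{lm}(t) \ge \beta$ and $\sum_{j \in I_l^\varepsilon(t)} \beta_{lj}(t) \le 1$, the weighted mean in \eqref{eq_update} satisfies
\[
\bar{x}_l - L \;=\; \frac{\sum_{j \in I_l^\varepsilon(t)} \beta_{lj}(t) \bigl(x_j(t) - L\bigr)}{\sum_{j \in I_l^\varepsilon(t)} \beta_{lj}(t)} \;\ge\; \beta \cdot \frac{\varepsilon\alpha\beta}{12} \;=\; \frac{\varepsilon\alpha\beta^2}{12}.
\]
Since $h \ge L + \varepsilon\alpha\beta^2/12$ as well, and the update \eqref{eq_update} is a convex combination of $\bar{x}_l$ and $h$, it follows that $x_l(t+1) \ge L + \varepsilon\alpha\beta^2/12$, whether $l$ is an ignorant ($\alpha_l(t)=0$) or a truth seeker. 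The identical computation applies to any individual with opinion exactly $L$ at time $t$, since they all share $\hat{l}(t)$'s confidence set.

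The last step is to lift this pointwise estimate to $x_{\hat{l}(t+1)}(t+1) \ge L + \varepsilon\alpha\beta^2/12$, which is condition~(5). By (the proof of) Lemma~\ref{lemma_decreasing} we know $\hat{l}(t+1) \in \mathcal{U}(t)$. If $\hat{l}(t+1)$ had old opinion equal to $L$, the previous paragraph concludes directly. I expect the main obstacle to be the remaining subcase, in which $\hat{l}(t+1)$ started strictly above $L$ yet ends up in the slab $[L, L + \varepsilon\alpha\beta^2/12)$ at time $t+1$: here $\hat{l}(t+1)$ must be dragged down by its own neighbors. In that subcase the individuals originally at $L$ have been pushed strictly above the new lower border, so either the minimum of $C_{\tilde{l}(t+1)}(t+1)$ itself in fact already exceeds $L + \varepsilon\alpha\beta^2/12$ -- delivering condition~(5) -- or the count of individuals inside the hope interval has strictly decreased between $t$ and $t+1$, triggering condition~(1). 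Either way, a good iteration is produced after the single step.
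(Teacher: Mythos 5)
Your opening reduction (dispatching the case $x_{\hat{l}(t)}(t)\ge h-\varepsilon-\frac{\varepsilon\alpha\beta}{12}$ via condition~(2) and Lemma~\ref{lemma_decreasing}) and your core estimate are sound, and the mechanism is the same as in the paper's proof: the witness $m\in\mathcal{M}(t)$ carries weight at least $\beta$ and sits at least $\frac{\varepsilon\alpha\beta}{12}$ above $L$, which lifts the weighted mean by $\frac{\varepsilon\alpha\beta^2}{12}$. The gap is in your last step. You establish $x_i(t+1)\ge L+\frac{\varepsilon\alpha\beta^2}{12}$ only for individuals whose opinion at time $t$ is exactly $L$, and you handle the possibility that $\hat{l}(t+1)$ started strictly above $L$ by a dichotomy that does not follow: the fact that the individuals formerly at $L$ now lie strictly above the new lower border neither puts that border above $L+\frac{\varepsilon\alpha\beta^2}{12}$ (you are precisely in the subcase where it is below) nor decreases the number of individuals in the hope interval --- those individuals are still inside the hope interval, merely no longer at its lower end, so condition~(1) is not triggered. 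As far as your argument shows, $\hat{l}(t+1)$ could be someone who started at $L+\frac{\varepsilon\alpha\beta^2}{100}$ and was dragged back to just above $L$, certifying no good-iteration condition.

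The repair is to prove the pointwise bound for \emph{every} $i$ with $x_i(t)\ge L$ (the set $\mathcal{U}(t)$ from the proof of Lemma~\ref{lemma_decreasing}, which is where $\hat{l}(t+1)$ must come from); this is exactly the case split the paper performs over $\mathcal{N}(t)$ and $\mathcal{M}(t)\cup\mathcal{F}(t)$. If $x_i(t)=L+\delta$ with $0\le\delta<\frac{\varepsilon\alpha\beta}{12}$, then $i$ is within $\varepsilon$ of $\hat{l}(t)$, hence in $C_{\tilde{l}(t)}(t)$, so all its confidence neighbours are at or above $L$ and $m$ is among them ($|x_i(t)-x_m(t)|\le\varepsilon$ since both lie in $[L,L+\varepsilon]$); your computation for $l$ then goes through verbatim for $i$. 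If $\delta\ge\frac{\varepsilon\alpha\beta}{12}$, you do not need $m$ at all: every neighbour of such an $i$ still has opinion at least $L$, and the self-weight $\beta_{ii}(t)\ge\beta$ applied to $x_i(t)-L=\delta$ already gives $\bar{x}_i\ge L+\frac{\varepsilon\alpha\beta^2}{12}$. Your convexity argument with $h>L+\frac{\varepsilon\alpha\beta^2}{12}$ then covers the truth seekers, condition~(5) follows, and the problematic subcase is shown to be vacuous rather than argued around.
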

\begin{proof}
  We assume that there is an individual $j\in\mathcal{M}(t)$, i.e.{},
  $d(\hat{l}(t),j,t)\in\left[\frac{\varepsilon\alpha\beta}{12},\varepsilon\right]$.
  For the evaluation of Equation (\ref{eq_update}) for elements of
  $\mathcal{N}(t)$, $\mathcal{M}(t)$, or $\mathcal{F}(t)$ we do not
  need to consider the opinion of individuals in
  $[n]\backslash(\mathcal{N}(t)\cup\mathcal{M}(t)\cup\mathcal{F}(t))$. Let
  $i$ be an element of $\mathcal{N}(t)$ with opinion
  $x_i(t)=x_{\hat{l}(t)}+\delta$, where
  $0\le\delta<\frac{\varepsilon\alpha\beta}{12}$.  Let us first assume
  that $i$ is an ignorant. Due to Individual~$j$ we have
  \begin{eqnarray*}
    x_i(t+1)&\ge&
    x_i(t)-\underset{\text{individuals in }\mathcal{N}(t)\backslash\{i\}}
    {\underbrace{\delta\left(1-2\beta\right)}}
    +\underset{i}{\underbrace{0\cdot\beta}}
    +\underset{j}{\underbrace{
    \left(\frac{\varepsilon\alpha\beta}{12}-\delta\right)\cdot\beta}}\\
    &\ge& x_{\hat{l}(t)}+\frac{\varepsilon\alpha\beta^2}{12}.
  \end{eqnarray*}
  For a truth seeker we similarly get
  \begin{eqnarray*}
    x_i(t+1)&\ge&x_i(t)+\alpha\varepsilon+(1-\alpha)
    \left(-\delta\left(1-2\beta\right)+\left(\frac{\varepsilon\alpha\beta}{12}
    -\delta\right)\cdot\beta\right)\\
    &\ge&x_{\hat{l}(t)}+\frac{\varepsilon\alpha\beta^2}{12}.
  \end{eqnarray*}
  Now let $i$ be an element of $\mathcal{M}(t)\cup\mathcal{F}(t)$ with $x_i(t)=x_{\hat{l}(t)}+\delta$ where
  $\delta\ge\frac{\varepsilon\alpha\beta}{12}$. In any case ($i$ being a truth seeker or an ignorant) we have
  $$
    x_i(t+1)\ge x_{\hat{l}(t)}+\delta-\underset{\text{individuals with smaller opinion than
    $i$}}{\underbrace{\delta(1-\beta)}}+\beta\cdot 0\ge
    x_{\hat{l}(t)}+\frac{\varepsilon\alpha\beta^2}{12}.
  $$
\end{proof}

\begin{lemma}
  If $x_{\hat{l}(t)} <
  h-\varepsilon-\frac{\varepsilon\alpha\beta}{12}$ then after at least
  $3$ time steps we have a good iteration.
\end{lemma}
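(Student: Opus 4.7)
The plan is to argue by contradiction: assume that starting from time $t$, none of the five conditions defining a good iteration is met for any $r \in \{1,2,3\}$, and derive a contradiction.

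First I would apply Lemma \ref{lemma_middle} in sequence at times $t$, $t+1$, and $t+2$. That lemma, combined with Lemma \ref{lemma_decreasing}, shows that $\mathcal{M}(t+s) \neq \emptyset$ forces $x_{\hat{l}(t+s+1)}(t+s+1) \ge x_{\hat{l}(t)}(t) + \tfrac{\varepsilon\alpha\beta^2}{12}$, which is exactly condition (5) with $r = s+1$. So the contradiction hypothesis forces $\mathcal{M}(t+s) = \emptyset$ for $s = 0, 1, 2$. This is the structural fact I exploit throughout: with $\mathcal{M}(t+s)$ empty, the confidence set of $\hat{l}(t+s)$ is contained in $\mathcal{N}(t+s) \subseteq [x_{\hat{l}(t+s)}, x_{\hat{l}(t+s)} + \tfrac{\varepsilon\alpha\beta}{12})$, and any $\mathcal{N}$-individual whose confidence-graph edges reach above $\mathcal{N}$ can do so only through the narrow band
\[
\mathcal{F}_{\mathrm{near}}(t+s) := \mathcal{F}(t+s) \cap ( x_{\hat{l}(t+s)} + \varepsilon,\; x_{\hat{l}(t+s)} + \varepsilon + \tfrac{\varepsilon\alpha\beta}{12} ].
\]

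Since condition (2) fails throughout, $x_{\hat{l}(t+s)} < h - \varepsilon - \tfrac{\varepsilon\alpha\beta}{12}$ is preserved, so neither $h$ nor any truth seeker in $C_{\tilde{l}(t+s)}(t+s)$ lies in $\mathcal{N}(t+s) \cup \mathcal{M}(t+s)$. Because $\hat{l}(t+s)$ and $\tilde{l}(t+s)$ (or the vertex $0$) share a component, one of two cases must occur: either \emph{(A)} $\mathcal{N}(t+s)$ contains a truth seeker $k$, or \emph{(B)} the component chain out of $\mathcal{N}$ passes through some $j$ in a non-empty $\mathcal{F}_{\mathrm{near}}(t+s)$. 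In case (A), formula (\ref{eq_update}) applied to $k$, whose confidence set lies inside $\mathcal{N}(t+s)$ and whose distance to $h$ exceeds $\varepsilon$, yields $x_k(t+s+1) \ge x_{\hat{l}(t+s)} + \alpha\varepsilon$; since $\beta \le \tfrac12$ gives $\alpha\varepsilon > \tfrac{\varepsilon\alpha(\beta+\beta^2)}{12}$, the point $k$ leaves $\mathcal{N}(t+s+1)$, and by emptiness of $\mathcal{M}(t+s+1)$ it must enter $\mathcal{F}(t+s+1)$. Either $k$ disconnects from $C_{\hat{l}(t+s+1)}(t+s+1)$, firing (1), or it lands in $\mathcal{F}_{\mathrm{near}}(t+s+1)$, reducing case (A) at time $t+s$ to case (B) at time $t+s+1$.

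Case (B) will be the hard part. I would pick a lowest $j \in \mathcal{F}_{\mathrm{near}}(t+s)$, estimate its one-step motion using (\ref{eq_update}) together with Lemmas \ref{lemma_interval_1} and \ref{lemma_interval_2} over its confidence set (which reaches the top of $\mathcal{N}(t+s)$ and up into $\mathcal{F}(t+s)$), and distinguish three sub-outcomes: $j$ is pushed down into $\mathcal{M}(t+s+1)$, contradicting Step 1; $j$ is pulled up out of $\mathcal{F}_{\mathrm{near}}(t+s+1)$, severing the $\mathcal{N}$-cluster from the drivers and firing (1); or $j$ persistently drags the ignorants in $\mathcal{N}$ upwards by amounts that accumulate over three iterations past $\tfrac{\varepsilon\alpha\beta^2}{12}$, firing (5). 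The main obstacle is that $j$ may be either a truth seeker or an ignorant and the distribution of $\mathcal{F}$-individuals above $\mathcal{F}_{\mathrm{near}}$ is essentially unrestricted; the one-step estimate for $j$ therefore requires a careful sub-case analysis, and the three-step budget is tight precisely to accommodate both the reduction from case (A) to case (B) and two further iterations of gradual dragging within case (B).
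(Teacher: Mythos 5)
Your setup matches the paper's: force $\mathcal{M}(t)=\mathcal{M}(t+1)=\mathcal{M}(t+2)=\emptyset$ via Lemma \ref{lemma_middle}, assume $\hat{l}$ barely moves (else condition (5) fires), and exploit that the component of $\hat{l}$ must connect upward through individuals sitting in the narrow band $(x_{\hat{l}}+\varepsilon,\,x_{\hat{l}}+\varepsilon+\frac{\varepsilon\alpha\beta}{12}]$. Your case (A) --- truth seekers are expelled from $\mathcal{N}$ because the truth is more than $\varepsilon$ away and pulls with weight at least $\alpha$ --- is also the paper's first step, though to conclude $\mathcal{N}(t+1)\cap K=\emptyset$ you must additionally rule out truth seekers \emph{entering} $\mathcal{N}(t+1)$ from $\mathcal{F}(t)$, not only ones leaving $\mathcal{N}(t)$; the paper handles this with a separate estimate for truth seekers located in the near band at time $t$.

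The genuine gap is your case (B), which you explicitly defer as ``the hard part'' requiring ``a careful sub-case analysis'' of the one-step motion of $j\in\mathcal{F}_{\mathrm{near}}$. That is the wrong object to track, and tracking it is indeed intractable because, as you note yourself, the distribution of $\mathcal{F}$-individuals above $j$ is unconstrained. The paper instead tracks the \emph{ignorant} $i\in\mathcal{N}(t+1)$ adjacent to $j$: since $x_j(t+1)-x_i(t+1)\ge\varepsilon-\frac{\varepsilon\alpha\beta}{12}$ and $j$ carries weight at least $\beta$ in (\ref{eq_update}), while every downward influence on $i$ comes from within $\mathcal{N}(t+1)$ and so totals at most $(1-2\beta)\cdot\frac{\varepsilon\alpha\beta}{12}$, a \emph{single} step lifts $i$ by essentially $\beta\varepsilon$, far past the top of $\mathcal{N}(t+2)$; a matching upper bound (the remaining weight is at most $1-2\beta$ and each upward influence acts over a range of at most $\varepsilon$) keeps $i$ within $\varepsilon$ of $\hat{l}(t+2)$. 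Hence $i\in\mathcal{M}(t+2)$, contradicting the standing emptiness assumption --- no ``accumulation of gradual dragging over three iterations'' is needed or available, since the pull on $\hat{l}$ itself need not grow at all. Without this one-step estimate on the adjacent ignorant, your proposal does not close, and your stated budget (one step for the reduction (A)$\to$(B), two steps of dragging in (B)) does not correspond to a completed argument.
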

\begin{proof}
  Due to Lemma \ref{lemma_middle} we can assume
  $\mathcal{M}(t)=\mathcal{M}(t+1)=\mathcal{M}(t+2)=\emptyset$. We can
  also assume
  \begin{eqnarray*}
    \left|x_{\hat{l}(t)}(t)-x_{\hat{l}(t+1)}(t+1)\right|&<&\frac{\varepsilon\alpha\beta^2}{12},\\
    \left|x_{\hat{l}(t+1)}(t+1)-x_{\hat{l}(t+2)}(t+2)\right|&<&\frac{\varepsilon\alpha\beta^2}{12},\text{ and}\\
    d\left(\hat{l}(t+1),0,t+1\right)&>&\varepsilon+\frac{\varepsilon\alpha\beta}{12}
  \end{eqnarray*}
  since otherwise we have a good iteration in at most $2$ time
  steps. At first we claim $\mathcal{N}(t+1)\cap K=\emptyset$.  If at
  time $t$ there is a truth seeker $i\in\mathcal{N}(t)\cap K$ then we
  have
  \begin{eqnarray*}
    x_i(t+1)&\ge& x_{\hat{l}(t)}(t)+\alpha\varepsilon-\frac{(1-\alpha)(1-\beta)\varepsilon\alpha\beta}{12}\\
    &\ge& x_{\hat{l}(t)}(t) +\frac{\varepsilon\alpha\beta^2}{12}+\frac{\varepsilon\alpha\beta}{12}\\
    &\ge& x_{\hat{l}(t+1)}(t+1) +\frac{\varepsilon\alpha\beta}{12}.
  \end{eqnarray*}
  So the only truth seekers that have a chance to move into the set
  $\mathcal{N}(t+1)$ could be those of the set $\mathcal{F}(t)$. So
  let truth seeker $i$ be in the set $\mathcal{F}(t)\cap K$, with
  $x_i(t)=x_{\hat{l}(t)}(t)+\delta$, where
  $\varepsilon<\delta<\varepsilon+\frac{\varepsilon\alpha\beta}{12}$.
  (Truth seekers where $\delta\ge
  \varepsilon+\frac{\varepsilon\alpha\beta}{12}$ are ruled out by
  Lemma \ref{lemma_interval_2}.) We have
  \begin{eqnarray*}
     x_i(t+1) &\ge& \underset{\le x_i(t)}{\underbrace{x_{\hat{l}(t)}(t)+\varepsilon}}-(1-\alpha)(1-2\beta)\\
              &\ge& x_{\hat{l}(t)}(t)+\varepsilon\alpha\\
              &\ge& x_{\hat{l}(t)}(t)+\frac{\varepsilon\alpha\beta^2}{12}+\frac{\varepsilon\alpha\beta}{12}\\
              &\ge& x_{\hat{l}(t+1)}(t+1)+\frac{\varepsilon\alpha\beta}{12}.
  \end{eqnarray*}
  Similarly, we can deduce $\mathcal{N}(t+2)\cap K=\emptyset$. Now we
  can assume that the individuals of $\mathcal{N}(t+1)$, who are all
  ignorants, are in the hope interval at time $t+1$, since otherwise
  we would have a good iteration after $1$ time step. So there exist
  individuals $i\in\mathcal{N}(t+1)$ and $j\in\mathcal{F}(t+1)$ with
  $\left|x_i(t+1)-x_j(t+1)\right|\le\varepsilon$. We set
  $x_i(t+1)=x_{\hat{l}(t+1)}(t+1)+\delta$, where $0\le
  \delta\le\frac{\varepsilon\alpha\beta}{12}$ and calculate
  \begin{eqnarray*}
    x_i(t+2) &\ge& x_i(t+1)-(1-2\beta)\delta+
    \underset{i}{\underbrace{\beta\cdot 0}}
    +\underset{j}{\underbrace{\beta\left(\varepsilon-\frac{\varepsilon\alpha\beta}{12}\right)}}\\
    &\ge& x_{\hat{l}(t+1)}(t+1)+\frac{\varepsilon\alpha\beta^2}{12}+\frac{\varepsilon\alpha\beta}{12}\\
    &\ge& x_{\hat{l}(t+2)}(t+2)+\frac{\varepsilon\alpha\beta}{12}.
  \end{eqnarray*}
  For the other direction we have
  \begin{eqnarray*}
    x_i(t+2) &\le& x_i(t+1)-\underset{\hat{l}(t+1)}{\underbrace{\beta\delta}}+(1-2\beta)\varepsilon\\
             &\le& x_{\hat{l}(t+1)}(t+1)+\frac{\varepsilon\alpha\beta}{12}+\varepsilon-2\beta\varepsilon\\
             &\le& x_{\hat{l}(t+1)}(t+1)+\frac{\varepsilon\alpha\beta^2}{12}+\varepsilon\\
             &\le& x_{\hat{l}(t+2)}(t+2)+\varepsilon.
  \end{eqnarray*}
  Thus, $i\in\mathcal{M}(t+2)$, which results in a good iteration in three time steps.
\end{proof}

Thus, we can conclude:
\begin{corollary}
  After a finite number $T_1(\varepsilon,n,\alpha,\beta)$ of steps we
  have $x_{\hat{l}(T_1)}(T_1)\ge h-
  \varepsilon-\frac{\varepsilon\alpha\beta}{12}$ and
  $x_{\hat{u}(T_1)}(T_1)\le
  h+\varepsilon+\frac{\varepsilon\alpha\beta}{12}$.
\end{corollary}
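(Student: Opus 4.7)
The plan is to obtain the corollary by iterating the preceding lemma (together with its symmetric analog for $x_{\hat{u}}$) and bounding the total number of good iterations by a quantity depending only on the structural parameters $n$, $\varepsilon$, $\alpha$, $\beta$. The guiding observation is that each of the five defining conditions of a good iteration is chargeable to a separate monotone potential, so the total count of good iterations is finite, which forces the threshold conditions to eventually be met.

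First I would note that the preceding lemma applies verbatim, by reflection of the configuration around the truth $h$, to the opposite condition $x_{\hat{u}(t)}(t) > h + \varepsilon + \frac{\varepsilon\alpha\beta}{12}$: whenever either threshold is violated at time $t$, a good iteration must occur within at most three subsequent time steps.

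Next I would bound the number of occurrences of each type of good iteration. Type (1) strictly decreases the integer count of individuals in the hope interval, which is nonnegative and starts at most at $n$, so it can happen at most $n$ times. Types (2) and (3) each occur at most once, because by Lemma \ref{lemma_decreasing} the sequence $x_{\hat{l}(t)}(t)$ is monotonically non-decreasing and $x_{\hat{u}(t)}(t)$ is monotonically non-increasing; once the corresponding threshold is reached or passed it persists forever. Types (4) and (5) each contribute a monotone jump of magnitude at least $\frac{\varepsilon\alpha\beta^2}{12}$ to $x_{\hat{u}}$ respectively $x_{\hat{l}}$, and since both sequences live in $[0,1]$, the total monotone variation is at most $1$; hence each of these types produces at most $\frac{12}{\varepsilon\alpha\beta^2}$ events. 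Summing yields at most $n + 2 + \frac{24}{\varepsilon\alpha\beta^2}$ good iterations in total, and multiplying by the factor of three (the maximal spacing between a violation and its good iteration) gives the explicit bound $T_1(\varepsilon,n,\alpha,\beta) = 3\bigl(n + 2 + \tfrac{24}{\varepsilon\alpha\beta^2}\bigr)$.

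The main point requiring care is the bookkeeping: one must verify that the five conditions are chargeable to genuinely independent potentials, so that their individual bounds may simply be added rather than combined in some more delicate fashion. Since types (1), (2)--(3), and (4)--(5) draw from essentially disjoint budgets (the integer count of individuals in the hope interval, the one-shot threshold-crossing events, and the monotone total variations of $x_{\hat{l}}$ and $x_{\hat{u}}$, respectively), the sum is a valid global bound, so after $T_1$ time steps no further violations are possible. Once both thresholds are met, Lemma \ref{lemma_decreasing} guarantees they persist at all later times, which is exactly the conclusion of the corollary.
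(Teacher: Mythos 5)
Your proposal is correct and follows essentially the same route as the paper: the paper asserts ``Clearly, there is only a finite number of good iterations'' and chooses $T_1=3\bigl(n+2\cdot 1+2\cdot\tfrac{12}{\varepsilon\alpha\beta^2}\bigr)$, which is exactly the counting you carry out explicitly (type (1) charged to the non-increasing count of individuals in the hope interval, types (2)--(3) to the one-shot threshold crossings, types (4)--(5) to the monotone variation of $x_{\hat{l}}$ and $x_{\hat{u}}$ guaranteed by Lemma \ref{lemma_decreasing}). Your write-up merely makes explicit the bookkeeping the paper leaves implicit, and arrives at the same bound.
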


Due to Lemma \ref{lemma_lonely} there can not exist a general bound on
the convergence that does not depend on $\alpha$. We consider the two
side lengths $\ell_2(t):=|x_{\hat{u}(t)}(t)-h|$ and
$\ell_1(t):=|x_{\hat{l}(t)}(t)-h|$ of the hope interval. Clearly $\ell_1(t)$
and $\ell_2(t)$ are not increasing due to Lemma
\ref{lemma_decreasing}. For $t\ge T_1$ we have
$\ell_1(t),\ell_2(t)\le\varepsilon+\frac{\varepsilon\alpha\beta}{12}$

\begin{lemma}
  \label{lemma_epsilon_interval}
  If $\ell_1(t)+\ell_2(t)\le\varepsilon$ then we have
  $$
    (\ell_1(t+2)+\ell_2(t+2))\le (\ell_1(t)+\ell_2(t))\cdot\left(1-\frac{\alpha\beta}{2}\right).
  $$
\end{lemma}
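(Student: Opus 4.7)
\medskip
The plan is to exploit that the hypothesis $\ell_1(t)+\ell_2(t)\le\varepsilon$ makes the hope interval a clique in the confidence graph. Setting $L:=x_{\hat{l}(t)}(t)$ and $U:=x_{\hat{u}(t)}(t)$, one has $L\le h\le U$ and $U-L=\ell_1(t)+\ell_2(t)\le\varepsilon$, so any two individuals with opinions in $[L,U]$ see each other. Moreover, an individual outside $[L,U]$ must have opinion farther than $\varepsilon$ from both $\hat{l}(t)$ and $\hat{u}(t)$, since otherwise it would be adjacent to a hope-interval vertex and hence forced into the same connected component. Consequently, for every $i$ in the hope interval, $I_i^\varepsilon(t)$ coincides with the hope interval, and no ignorant outside the hope interval can migrate into $[L,U]$ in a single step either (its confidence set sits entirely on one side of $[L,U]$). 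Combined with Lemma~\ref{lemma_decreasing}, this means every updated opinion in $[L',U']:=[x_{\hat{l}(t+1)}(t+1),x_{\hat{u}(t+1)}(t+1)]$ originates from the old hope interval, so it suffices to bound the one-step images of those individuals.

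\medskip
Next, I would bound the weighted mean $\bar{x}_i$ uniformly. Isolating $\hat{l}(t)$ and $\hat{u}(t)$ in the mean (each with weight at least $\beta$ and carrying the extreme opinions $L$ and $U$, while the remaining opinions lie in $[L,U]$) gives
\[
\bar{x}_i \in \bigl[L+\beta(U-L),\ U-\beta(U-L)\bigr]\quad\text{for every $i$ in the hope interval.}
\]
For an ignorant this is $x_i(t+1)$ itself. For a truth seeker, $x_i(t+1)=\alpha_i(t)h+(1-\alpha_i(t))\bar{x}_i$ is a convex combination of $h$ and $\bar{x}_i$, so whether $x_i(t+1)\ge L+\beta(U-L)$ is governed by the position of $h$ relative to that threshold, i.e., by whether $\ell_1(t)\ge\beta(\ell_1(t)+\ell_2(t))$.

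\medskip
A short case split then finishes the proof. In the \emph{balanced} sub-case $\min(\ell_1(t),\ell_2(t))\ge\beta(\ell_1(t)+\ell_2(t))$, $h$ itself lies in $[L+\beta(U-L),U-\beta(U-L)]$, so the truth-seeker updates also stay in that interval; hence both $x_{\hat{l}(t+1)}(t+1)$ and $x_{\hat{u}(t+1)}(t+1)$ are pinched by $\beta(U-L)$, giving $\ell_1(t+1)+\ell_2(t+1)\le(1-2\beta)(\ell_1(t)+\ell_2(t))$. In the \emph{unbalanced} sub-case, WLOG $\ell_1(t)<\beta(\ell_1(t)+\ell_2(t))$, one has $h<L+\beta(U-L)\le\bar{x}_i$ for every hope-interval $i$, so each truth-seeker update lies in $[h,\bar{x}_i]$; together with $x_{\hat{l}(t+1)}(t+1)\le h$ (which holds by the definition of $\hat{l}$ via the virtual vertex $0$), this forces $\ell_1(t+1)=0$. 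The upper side still enjoys the $\beta(U-L)$ contraction (the opposite unbalance is ruled out by $\beta\le 1/2$), which gives $\ell_1(t+1)+\ell_2(t+1)\le(1-\beta)(\ell_1(t)+\ell_2(t))$. Either way, using $\alpha\le 1\le 2$ one has $(1-\beta)\le(1-\alpha\beta/2)$, and Lemma~\ref{lemma_decreasing} propagates the bound from time $t+1$ to time $t+2$. The delicate point throughout is the clique/isolation argument of the first paragraph; once that is in hand, the rest reduces to a one-step arithmetic check.
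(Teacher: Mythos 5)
Your first paragraph (the hope interval is a clique, outsiders are isolated from it and cannot enter in one step, so only old hope-interval members matter) is correct and is implicitly what the paper uses. The balanced sub-case is also fine. The gap is in the unbalanced/degenerate sub-case, and it is hidden in the sentence claiming $\bar{x}_i\in[L+\beta(U-L),\,U-\beta(U-L)]$ ``for every $i$ in the hope interval.'' The lower bound needs an \emph{actual individual} of $[n]$ sitting at $U$ with normalized weight $\ge\beta$, and the upper bound needs one sitting at $L$. But $\hat{l}(t)$ (resp.\ $\hat{u}(t)$) may be the virtual truth vertex $0$: precisely when $\ell_1(t)=0$ and no individual has opinion $\le h$, the endpoint $L=h$ carries no individual, it does not appear in the averaging in Equation~(\ref{eq_update}), and the bound $\bar{x}_i\le U-\beta(U-L)$ has no justification. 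It is in fact false: take one truth seeker and one ignorant, both at $h+\ell_2$ with $0<\ell_2\le\varepsilon$ and $\beta_{ij}=\frac12$. Then $\ell_1(t)=0$, the ignorant's update is again $h+\ell_2$, so $\ell_2(t+1)=\ell_2(t)$ and your claimed one-step contraction $\ell_1(t+1)+\ell_2(t+1)\le(1-\beta)(\ell_1(t)+\ell_2(t))$ fails. Note also that once your own unbalanced case forces $\ell_1(t+1)=0$, the very next iteration lands in this degenerate situation, so it cannot be dismissed as a measure-zero corner.

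This degenerate case is exactly why the lemma is stated over \emph{two} steps with the $\alpha$-dependent factor $1-\frac{\alpha\beta}{2}$ rather than one step with $1-\beta$: when all individuals sit on one side of $h$ with nobody at the far endpoint, the only force pulling the configuration toward $h$ is the truth seekers' attraction term. The paper's proof handles this as its second case ($\ell_1(t)>\ell_2(t)=0$ after the WLOG): in the first step every truth seeker moves to within $(1-\alpha)\ell_1(t)$ of $h$, hence detaches from $x_{\hat{l}(t+1)}(t+1)$ by at least $\frac{\alpha}{2}\ell_1(t)$ (unless $\ell_1$ already dropped by $\frac{\alpha}{2}$), and only in the second step do the ignorants get dragged by $\beta$ times that gap, yielding $\frac{\alpha\beta}{2}$. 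A contraction rate independent of $\alpha$, as your argument would produce, should itself be a warning sign in view of Lemma~\ref{lemma_lonely} and Example~\ref{ex_3}. To repair your proof you would need to add this two-step truth-seeker argument for the case where one endpoint of the hope interval is the virtual vertex; your remaining cases can stand as written.
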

\begin{proof}
  Let us assume, without loss of generality, that $\ell_1(t)\ge
  \ell_2(t)$. At first we consider the case $\ell_2(t)>0$. If $i$ is an
  ignorant with $x_i(t)=h-\ell_1(t)+\delta$ then we have
  \begin{eqnarray*}
    x_i(t+1) &\ge& h-\ell_1(t)+\delta -(1-2\beta)\delta+\beta(\ell_1(t)+\ell_2(t)-\delta)\\
    &\ge& h-(1-\beta)\ell_1(t).
  \end{eqnarray*}
  For a truth seeker $i$ with $x_i(t)=h-\ell_1(t)+\delta$ we have
  \begin{eqnarray*}
    x_i(t+1) &\ge& h-\ell_1(t)+\delta-\alpha(\delta-\ell_1(t))-(1-\alpha)(1-2\beta)\delta+\\
    &&(1-\alpha)\beta(\ell_1(t)+\ell_2(t)-\delta)\\
    &\ge&h-\ell_1(t)+\beta\delta(1-\alpha)+\alpha \ell_1(t)(1-\beta)+\beta \ell_2(t)(1-\alpha)+\beta \ell_1(t)\\
    &\ge&h-(1-\beta)\ell_1(t).
  \end{eqnarray*}
  Similarly we obtain $x_i(t+1)\le h+(1-\beta)\ell_2(t)$ in both cases.

  \medskip

  Next we consider the case $\ell_1(t)>\ell_2(t)=0$ and
  $l_i(t+1)>l_i(t)\cdot\left(1-\frac{\alpha}{2}\right)$.  Let $i$ be
  an arbitrary truth seeker with opinion $x_i(t)=h-\ell_1(t)+\delta$.  We
  have
  \begin{eqnarray*}
    x_i(t+1)&\ge& h-\ell_1(t)+\delta+\alpha(\ell_1(t)-\delta)-(1-\alpha)(1-\beta)\delta\\
    &\ge & h-\ell_1(t)+\alpha \ell_1(t).
  \end{eqnarray*}
  Thus, we have $x_i(t+1)\ge h-\ell_1(t+1)+\frac{\alpha}{2}\cdot
  \ell_1(t)$. If $j$ is an ignorant with $x_j(t+1)=h-\ell_1(t+1)+\delta$,
  then we have
  \begin{eqnarray*}
    x_j(t+2)&\ge& h-\ell_1(t+1)+\delta-(1-2\beta)\delta+\beta\left(\frac{\alpha}{2}\cdot \ell_1(t)-\delta\right)\\
    &\ge& h-\ell_1(t)+\frac{\alpha\beta \ell_1(t)}{2}.
  \end{eqnarray*}
  For an arbitrary truth seeker $j$ we have
  \begin{eqnarray*}
    x_i(t+2)&\ge& h-\ell_1(t+1)+\alpha \ell_1(t+1)\\
    &\ge& h-\ell_1(t)+\frac{\alpha\beta \ell_1(t)}{2}.
  \end{eqnarray*}

  \medskip

  \noindent
  Thus, in all cases we have $(\ell_1(t+2)+\ell_2(t+2))\le
  (\ell_1(t)+\ell_2(t))\cdot\left(1-\frac{\alpha\beta}{2}\right)$.
\end{proof}

This states that once the length of the hope interval becomes at most
$\varepsilon$ its lengths converges to zero.

\begin{lemma}
  \label{lemma_one_step}
  Let $t\ge T_1$.  If there exists an individual~$i$ with
  $\frac{\alpha\beta \ell_1(t)}{12}\le d(\hat{l}(t),i,t)\le\varepsilon$,
  then we have $\ell_1(t+1)\le \ell_1(t)\cdot
  \left(1-\frac{\alpha\beta^2}{12}\right)$. If there exists an
  individual $i$ with $\frac{\alpha\beta \ell_2(t)}{12}\le
  d(\hat{u}(t),i,t)\le\varepsilon$, then we have $\ell_2(t+1)\le
  \ell_2(t)\cdot \left(1-\frac{\alpha\beta^2}{12}\right)$.
\end{lemma}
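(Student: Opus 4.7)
By symmetry it suffices to prove the first assertion. The target $\ell_1(t+1)\le\ell_1(t)(1-\alpha\beta^2/12)$ rearranges to
$$
  x_{\hat{l}(t+1)}(t+1)\ge x_{\hat{l}(t)}(t)+\frac{\alpha\beta^2\ell_1(t)}{12},
$$
and Lemma~\ref{lemma_decreasing} already forces $\hat{l}(t+1)$ into the set $\mathcal{U}(t):=\{k\in[n]:x_k(t)\ge x_{\hat{l}(t)}(t)\}$ from its proof, so I would prove the stronger pointwise statement $x_k(t+1)\ge x_{\hat{l}(t)}(t)+\alpha\beta^2\ell_1(t)/12$ for every $k\in\mathcal{U}(t)$. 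The plan is to mimic the case split of Lemma~\ref{lemma_middle} with the fixed threshold $\varepsilon\alpha\beta/12$ replaced by the state-dependent $\alpha\beta\ell_1(t)/12$ and with the ``middle'' individual now furnished by the hypothesis instead of being an extra assumption. Concretely, I would write $\delta(k):=x_k(t)-x_{\hat{l}(t)}(t)\ge 0$ and partition $\mathcal{U}(t)$ into a \emph{near} part $\delta<\alpha\beta\ell_1(t)/12$, a \emph{mid} part $\alpha\beta\ell_1(t)/12\le\delta\le\varepsilon$, and a \emph{far} part $\delta>\varepsilon$; the hypothesis places a witness $j$ in the mid part.

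The argument hinges on a simple subclaim: for every $k\in\mathcal{U}(t)$, no opinion entering the update of $x_k$ drops below $x_{\hat{l}(t)}(t)$. Indeed, if $\delta(k)\le\varepsilon$ then $\{k,\hat{l}(t)\}\in E(t)$, so $k$ lies in the same component of $\mathcal{G}(t)$ as $\hat{l}(t)$ and hence all of $I_k^\varepsilon(t)$ does too, with opinions $\ge x_{\hat{l}(t)}(t)$; if $\delta(k)>\varepsilon$, any individual with opinion $<x_{\hat{l}(t)}(t)$ is more than $\varepsilon$ below $x_k(t)$ and therefore outside $I_k^\varepsilon(t)$. Since the renormalized self-weight of $k$ in~(\ref{eq_update}) is at least $\beta$, the subclaim gives the basic bound
$$
  x_k(t+1)\ge \beta\,x_k(t)+(1-\beta)\,x_{\hat{l}(t)}(t)=x_{\hat{l}(t)}(t)+\beta\,\delta(k),
$$
which already settles every $k$ in the mid or far part. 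For $k$ in the near part, the witness $j$ belongs to $I_k^\varepsilon(t)$ (because $0\le x_j(t)-x_k(t)\le\varepsilon$) and its renormalized weight is at least $\beta$, so shifting one $\beta$-weight from $x_{\hat{l}(t)}(t)$ to $x_j(t)$ improves the estimate to
$$
  x_k(t+1)\ge x_{\hat{l}(t)}(t)+\beta\,\delta(k)+\beta\cdot\frac{\alpha\beta\ell_1(t)}{12}\ge x_{\hat{l}(t)}(t)+\frac{\alpha\beta^2\ell_1(t)}{12}.
$$

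Truth seekers require only a small addendum. If $x_k(t)\le h$ the attraction toward $h$ is upward and only reinforces the ignorant estimate; if $x_k(t)>h$ the witness is no longer needed, since one directly gets
$$
  x_k(t+1)\ge\alpha_k(t)\,h+(1-\alpha_k(t))\,x_{\hat{l}(t)}(t)\ge x_{\hat{l}(t)}(t)+\alpha\,\ell_1(t),
$$
which dominates the target because $\alpha\ge\alpha\beta^2/12$. I expect the main obstacle to be the clean justification of the subclaim that no opinion below $x_{\hat{l}(t)}(t)$ enters the update of any $k\in\mathcal{U}(t)$: without it, the worst-case $(1-\beta)$ pull could in principle push $x_k(t+1)$ arbitrarily far below the target. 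The truth-seeker split above and below $h$ also needs separate attention, but once the setup is in place the remaining work reduces to the convex combinations displayed above.
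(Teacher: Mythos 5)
Your proposal is correct and follows essentially the same route as the paper: both arguments show that every individual at or above $x_{\hat{l}(t)}(t)$ is mapped to at least $x_{\hat{l}(t)}(t)+\beta\delta+\frac{\alpha\beta^2\ell_1(t)}{12}$ by combining the renormalized self-weight $\ge\beta$, the witness's weight $\ge\beta$, and a worst-case pull of the remaining weight $\le 1-2\beta$ down to $x_{\hat{l}(t)}(t)$, the paper writing this as a single inequality where your near/mid/far split appears implicitly (the witness term is simply negative and discardable when $\delta$ is large). Your explicit subclaim that no opinion below $x_{\hat{l}(t)}(t)$ enters any relevant update, and the convexity treatment of the truth-seeker term, make precise two points the paper's computation leaves tacit.
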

\begin{proof}
  Due to symmetry it suffices to prove the first statement.  Let $j$
  be an ignorant with $x_j(t)=h-\ell_1(t)+\delta$, where $\delta\ge
  0$. We have
  \begin{eqnarray*}
     x_j(t+1) &\ge& h-\ell_1(t)+\delta-(1-2\beta)\delta+\underset{i}{\underbrace{\beta\left(
     \frac{\alpha\beta \ell_1(t)}{12}-\delta\right)}}\\
     &\ge& h-\left(1-\frac{\alpha\beta^2}{12}\right)\ell_1(t).
  \end{eqnarray*}
  For a truth seeker $j$ with $x_j(t)=h-\ell_1(t)+\delta$, $\delta\ge 0$
  we have
  \begin{eqnarray*}
    x_j(t+1) &\ge& h-\ell_1(t)+\delta+\alpha(\ell_1(t)-\delta)-(1-\alpha)(1-2\beta)\delta+\\
    &&\underset{i}{\underbrace{(1-\alpha)\beta\left(\frac{\alpha\beta \ell_1(t)}{12}-\delta\right)}}\\
    &\ge& h-\ell_1(t)+\beta\delta(1-\alpha)+\alpha \ell_1(t)\left(1-\frac{\alpha\beta^2}{12}\right)+\frac{\alpha\beta^2
    \ell_1(t)}{12}\\
    &\ge& h-\left(1-\frac{\alpha\beta^2}{12}\right)\ell_1(t).
  \end{eqnarray*}
\end{proof}

For transparency we introduce the following six sets:
\begin{eqnarray*}
  \mathcal{N}_1(t):&=&\left\{i\in[n]\mid d(\hat{l}(t),i,t)
  <\frac{\alpha\beta \ell_1(t)}{12}\right\},\\
  \mathcal{N}_2(t):&=&\left\{i\in[n]\mid d(\hat{u}(t),i,t)
  <\frac{\alpha\beta \ell_2(t)}{12}\right\},\\
  \mathcal{M}_1(t):&=&\left\{i\in[n]\mid \frac{\alpha\beta \ell_1(t)}{12}\le d(\hat{l}(t),i,t)\le\varepsilon\right\},\\
  \mathcal{M}_2(t):&=&\left\{i\in[n]\mid \frac{\alpha\beta \ell_2(t)}{12}\le d(\hat{u}(t),i,t)\le\varepsilon\right\},\\
  \mathcal{F}_1(t):&=&\left\{i\in[n]\mid d(\hat{l}(t),i,t)>\varepsilon,\,x_i(t)\le h+\ell_2(t)\right\},\\
  \mathcal{F}_2(t):&=&\left\{i\in[n]\mid d(\hat{u}(t),i,t)>\varepsilon,\,x_i(t)\ge h-\ell_1(t)\right\}.\\
\end{eqnarray*}

With this the individuals of the hope interval are partitioned into
$$
  \mathcal{N}_1(t)\cup\mathcal{M}_1(t)\cup\mathcal{F}_1(t)=\mathcal{N}_2(t)\cup\mathcal{M}_2(t)\cup\mathcal{F}_2(t).
$$

\begin{lemma}
  \label{lemma_two_step}
  If for $k\in\{1,2\}$ and $t\ge T_1$ there exists an ignorant $i\in
  \mathcal{N}_k(t)$ and an individual $j\in\mathcal{F}_k(t)$ with
  $|x_i(t)-x_j(t)|\le\varepsilon$ then $l_k(t+2)\le l_k(t)\cdot
  \left(1-\frac{\alpha\beta^2}{12}\right)$.
\end{lemma}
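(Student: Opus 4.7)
The plan is to reduce to a single application of Lemma~\ref{lemma_one_step} at time $t+1$: if I can exhibit an individual at distance in $\bigl[\tfrac{\alpha\beta\ell_1(t+1)}{12},\varepsilon\bigr]$ from $\hat{l}(t+1)$, that lemma combined with Lemma~\ref{lemma_decreasing} yields $\ell_1(t+2)\le\ell_1(t+1)\bigl(1-\tfrac{\alpha\beta^2}{12}\bigr)\le\ell_1(t)\bigl(1-\tfrac{\alpha\beta^2}{12}\bigr)$. The natural candidate is the ignorant $i$ itself one step later, because the presence of $j\in\mathcal{F}_1(t)$ in its confidence set will pull it away from $\hat{l}(t)$. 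I treat $k=1$; the case $k=2$ is symmetric.

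First I dispose of two shortcut cases. If $\mathcal{M}_1(t)\neq\emptyset$, apply Lemma~\ref{lemma_one_step} at time $t$ directly. If $x_{\hat{l}(t+1)}(t+1)\ge x_{\hat{l}(t)}(t)+\tfrac{\alpha\beta\ell_1(t)}{12}$, then $\ell_1(t+1)\le\ell_1(t)\bigl(1-\tfrac{\alpha\beta}{12}\bigr)\le\ell_1(t)\bigl(1-\tfrac{\alpha\beta^2}{12}\bigr)$ since $\beta\le 1$. In either shortcut Lemma~\ref{lemma_decreasing} lifts the bound from $t+1$ to $t+2$. So from now on I may assume $\mathcal{M}_1(t)=\emptyset$ and $c':=x_{\hat{l}(t+1)}(t+1)<c+\tfrac{\alpha\beta\ell_1(t)}{12}$, where $c:=x_{\hat{l}(t)}(t)$.

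Next I bound $x_i(t+1)$ from both sides. Write $x_i(t)=c+\delta$ with $0\le\delta<\tfrac{\alpha\beta\ell_1(t)}{12}$. Since $i$ shares an edge with $\hat{l}(t)$ in the confidence graph, $i\in C_{\tilde{l}(t)}(t)$, so $x_k(t)\ge c$ for every $k\in I_i^\varepsilon(t)$. Using $\beta_{ij}(t)\ge\beta$ and $x_j(t)-c>\varepsilon$ in the convex combination defining $x_i(t+1)$ gives the lower bound $x_i(t+1)>c+\beta\varepsilon$. For the upper bound, the self-weight $\beta_{ii}(t)\ge\beta$ anchors $x_i(t+1)$ at $c+\delta$ while the remaining confidences contribute at most $c+\delta+\varepsilon$, so $x_i(t+1)\le c+\delta+(1-\beta)\varepsilon<c+\varepsilon$; the last inequality follows from $\delta<\beta\varepsilon$, which in turn follows from the post-$T_1$ bound $\ell_1(t)\le\varepsilon(1+\tfrac{\alpha\beta}{12})$ together with $\alpha\le 1$.

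Finally I verify $i\in\mathcal{M}_1(t+1)$. The upper bound $d(\hat{l}(t+1),i,t+1)\le\varepsilon$ is immediate from $x_i(t+1)<c+\varepsilon\le x_{\hat{l}(t+1)}(t+1)+\varepsilon$. For the lower bound, combining $x_i(t+1)>c+\beta\varepsilon$ with $c'<c+\tfrac{\alpha\beta\ell_1(t)}{12}$ gives $d(\hat{l}(t+1),i,t+1)>\beta\bigl(\varepsilon-\tfrac{\alpha\ell_1(t)}{12}\bigr)$, and a short computation using $\ell_1(t),\ell_1(t+1)\le\varepsilon(1+\tfrac{\alpha\beta}{12})$ and $\alpha\le 1$ confirms this exceeds $\tfrac{\alpha\beta\ell_1(t+1)}{12}$. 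Invoking Lemma~\ref{lemma_one_step} at time $t+1$ with individual $i$ then concludes. The delicate point is pushing $x_i(t+1)$ strictly below $c+\varepsilon$; this is precisely where the explicit $1/12$-buffer built into the definition of $\mathcal{N}_1$ and into the corollary following $T_1$ pays off.
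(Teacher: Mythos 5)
Your proposal is correct and follows the same route as the paper: assume the shrinkage has not already occurred at $t+1$, show that the confidence weight $\beta_{ij}(t)\ge\beta$ of the far individual $j$ pushes the ignorant $i$ into $\mathcal{M}_1(t+1)$, and then invoke Lemma~\ref{lemma_one_step} at time $t+1$ together with the monotonicity from Lemma~\ref{lemma_decreasing}. The paper dismisses the membership $i\in\mathcal{M}_k(t+1)$ as ``easy to check''; your two-sided bound $c+\beta\varepsilon<x_i(t+1)<c+\varepsilon$ and the comparison against the moved anchor $x_{\hat{l}(t+1)}(t+1)$ supply exactly the missing verification.
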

\begin{proof}
  If $l_k(t+1)>l_k(t)\cdot \left(1-\frac{\alpha\beta^2}{12}\right)$,
  then it is easy to check that the influence of Individual~$j$
  suffices to put ignorant $i$ in set $\mathcal{M}_k(t+1)$. In this
  case we can apply Lemma \ref{lemma_one_step}
\end{proof}

\begin{lemma}
  \label{lemma_no_near_truth_seeker}
  If $\mathcal{N}_k(t+1)\cap K\neq\emptyset$ and $t\ge T_1$ then
  $l_k(t+1)\le l_k(t)\cdot\left(1-\frac{\alpha}{2}\right)$ for
  $k\in\{1,2\}$.
\end{lemma}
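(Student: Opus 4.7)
By symmetry it suffices to treat $k=1$; the argument for $k=2$ is identical with the roles of $\hat{l}$ and $\hat{u}$ reversed. Fix a truth seeker $i\in \mathcal{N}_1(t+1)\cap K$ (so $\alpha_i(t)\ge \alpha$); the inequality is trivial when $\ell_1(t+1)=0$, so I assume $\ell_1(t+1)>0$. The strategy is to sandwich $x_i(t+1)$ between an upper bound derived from $i$'s proximity to $\hat{l}(t+1)$ and a lower bound obtained from the truth-seeker update rule at time $t$, and then to combine.

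For the upper bound, note that $\frac{\alpha\beta\ell_1(t+1)}{12}<\varepsilon$, so $\{i,\hat{l}(t+1)\}$ is an edge of $\mathcal{G}(t+1)$ and $i\in C_{\tilde{l}(t+1)}(t+1)$. The minimality of $\hat{l}(t+1)$ inside its component gives $x_i(t+1)\ge x_{\hat{l}(t+1)}(t+1)$, hence $x_i(t+1)<x_{\hat{l}(t+1)}(t+1)+\frac{\alpha\beta\ell_1(t+1)}{12}$. Rewriting,
\[
\ell_1(t+1) \;\le\; h-x_i(t+1)+\frac{\alpha\beta\,\ell_1(t+1)}{12}.
\]

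For the lower bound I would use the update rule in the form $x_i(t+1)=\alpha_i(t)h+(1-\alpha_i(t))A$, where $A$ is the $\beta$-weighted average of $\{x_j(t):j\in I_i^\varepsilon(t)\}$. The crucial sub-claim is $A\ge x_{\hat{l}(t)}(t)=h-\ell_1(t)$. I would prove it by exploiting that in one dimension the connected components of $\mathcal{G}(t)$ are opinion-convex and separated from one another by strict opinion gaps of length $>\varepsilon$. Any neighbour $j$ of $i$ lies in the same component $C_i(t)$ as $i$, so either $C_i(t)=C_{\tilde{l}(t)}(t)$ and minimality of $\hat{l}(t)$ gives $x_j(t)\ge x_{\hat{l}(t)}(t)$ directly, or $C_i(t)$ is distinct from $C_{\tilde{l}(t)}(t)$ and the gap inequality forces its minimum opinion above $x_{\hat{l}(t)}(t)+\varepsilon>x_{\hat{l}(t)}(t)$. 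Granted the sub-claim, $\alpha_i(t)\ge\alpha$ yields
\[
x_i(t+1)\;\ge\;\alpha h+(1-\alpha)(h-\ell_1(t))\;=\;h-(1-\alpha)\,\ell_1(t).
\]

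Substituting the lower bound into the upper-bound inequality produces $(1-\alpha\beta/12)\,\ell_1(t+1)\le (1-\alpha)\,\ell_1(t)$, i.e.\ $\ell_1(t+1)\le \frac{1-\alpha}{1-\alpha\beta/12}\,\ell_1(t)$. It remains to verify the elementary inequality $\frac{1-\alpha}{1-\alpha\beta/12}\le 1-\frac{\alpha}{2}$ for all $\alpha\in(0,1]$, $\beta\in(0,\frac{1}{2}]$; cross-multiplying and simplifying reduces this to $\frac{\alpha}{2}-\frac{\alpha\beta}{12}+\frac{\alpha^2\beta}{24}\ge 0$, which holds since $\frac{1}{2}-\frac{\beta}{12}\ge \frac{1}{2}-\frac{1}{24}>0$ for $\beta\le\frac{1}{2}$.

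The main obstacle is the sub-claim on the average $A$ in the case $x_i(t)>h$, when $i$ belongs to the upper hope-component $C_{\tilde{u}(t)}(t)$: one must show that the component-gap argument really yields neighbours bounded below by $x_{\hat{l}(t)}(t)$, or alternatively that this configuration pushes $x_i(t+1)$ above $h$ so much that it cannot sit inside $\mathcal{N}_1(t+1)$. The hypothesis $t\ge T_1$, which ensures $\ell_1(t)\le \varepsilon(1+\alpha\beta/12)$, is precisely what absorbs the slack: it either forces the two hope components to merge (reducing to the easy case) or keeps $x_i(t+1)$ far enough above $h$ that $i\notin\mathcal{N}_1(t+1)$, contradicting the hypothesis and thereby ruling the delicate sub-case out entirely.
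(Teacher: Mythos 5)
Your proposal is correct and follows essentially the same route as the paper: both hinge on the lower bound $x_i(t+1)\ge h-(1-\alpha)\ell_1(t)$ for the truth seeker $i$, obtained because every confidence neighbour of $i$ at time $t$ lies at or above $x_{\hat{l}(t)}(t)$ while the truth pulls with weight at least $\alpha$. You additionally spell out the concluding step that the paper leaves implicit (combining with $i\in\mathcal{N}_1(t+1)$ and verifying $\frac{1-\alpha}{1-\alpha\beta/12}\le 1-\frac{\alpha}{2}$), and the sub-case $x_i(t)>h$ that you flag as a possible obstacle is in fact already disposed of by your own component-gap argument, since then $C_i(t)$ lies entirely above $C_{\tilde{l}(t)}(t)$ and the same lower bound on the average holds.
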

\begin{proof}
  Due to symmetry it suffices to consider $k=1$. So let $i$ be a truth
  seeker with $i\in\mathcal{N}_1(t+1)$.  We set
  $x_i(t)=h-\ell_1(t)+\delta$ and calculate
  \begin{eqnarray*}
    x_i(t+1) &\ge& h-\ell_1(t)+\delta +\alpha(\ell_1(t)-\delta)-(1-\alpha)(1-\beta)\delta\\
    &\ge& h-(1-\alpha)\ell_1(t).
  \end{eqnarray*}
\end{proof}

\begin{lemma}
  \label{lemma_one_shrinks}
  For $t\ge T_1$ we have $l_k(t+3)\le l_k(t)\cdot
  \left(1-\frac{\alpha\beta^2}{12}\right)$ for at least one
  $k\in\{1,2\}$.
\end{lemma}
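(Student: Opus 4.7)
The plan is to reduce to one of the earlier Lemmas \ref{lemma_epsilon_interval}, \ref{lemma_one_step}, \ref{lemma_two_step}, and \ref{lemma_no_near_truth_seeker} by an exhaustive case analysis; in each case I show that some earlier lemma forces $\ell_k$ to shrink by at least the factor $(1-\alpha\beta^2/12)$ within three iterations for some $k \in \{1,2\}$. I will repeatedly use the monotonicity of $\ell_1$ and $\ell_2$ (Lemma \ref{lemma_decreasing}) together with the elementary inequalities $(1-\alpha/2) \le (1-\alpha\beta^2/12)$ and $(1-\alpha\beta/2) \le (1-\alpha\beta^2/12)$, both valid because $\beta \le 1/2$.

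First I would handle the case $\ell_1(s)+\ell_2(s) \le \varepsilon$ for some $s \in \{t,t+1\}$. Lemma \ref{lemma_epsilon_interval} at time $s$ yields $\ell_1(s+2)+\ell_2(s+2) \le (\ell_1(s)+\ell_2(s))(1-\alpha\beta/2)$; a one-line pigeonhole (if both $\ell_k(s+2) > \ell_k(s)(1-\alpha\beta/2)$ the sum would strictly exceed the bound) exhibits some $k$ with $\ell_k(s+2) \le \ell_k(s)(1-\alpha\beta/2)$, and monotonicity lifts this to $\ell_k(t+3) \le \ell_k(t)(1-\alpha\beta^2/12)$. Otherwise $\ell_1(s)+\ell_2(s) > \varepsilon$ for $s \in \{t,t+1\}$; I then try Lemma \ref{lemma_one_step} (applicable if $\mathcal{M}_k(s) \ne \emptyset$ for some $k$ and $s \in \{t,t+1,t+2\}$) and next Lemma \ref{lemma_no_near_truth_seeker} (applicable if $\mathcal{N}_k(s) \cap K \ne \emptyset$ for some $k$ and $s \in \{t+1,t+2,t+3\}$). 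Both deliver a one-step decrease by factor $(1-\alpha\beta^2/12)$ or $(1-\alpha/2)$, and monotonicity again gives the claim at time $t+3$.

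The substantive remaining case is $\mathcal{M}_k(s) = \emptyset$ for all $s \in \{t,t+1,t+2\}$ and $\mathcal{N}_k(s) \cap K = \emptyset$ for all $s \in \{t+1,t+2,t+3\}$, $k \in \{1,2\}$. Assuming $K \ne \emptyset$ (otherwise the main theorem is vacuous), either $\tilde{l}(t+1) \ne 0$ or $\tilde{u}(t+1) \ne 0$; by symmetry take $\tilde{l}(t+1) \ne 0$. Then $\tilde{l}(t+1)$ is a truth seeker with opinion in $[x_{\hat{l}(t+1)}(t+1), h]$, hence inside the hope interval; being neither in $\mathcal{N}_1(t+1)$ nor $\mathcal{M}_1(t+1)$, it must lie in $\mathcal{F}_1(t+1)$. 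Now $\hat{l}(t+1) \in \mathcal{N}_1(t+1)$ and $\tilde{l}(t+1) \in \mathcal{F}_1(t+1)$ belong to the same connected component $C_{\tilde{l}(t+1)}(t+1)$ of $\mathcal{G}(t+1)$; listing that component's vertices in increasing opinion order yields a path in $\mathcal{G}(t+1)$ whose consecutive opinions differ by at most $\varepsilon$ (otherwise the component would split) and which stays inside $[x_{\hat{l}(t+1)}(t+1), x_{\tilde{l}(t+1)}(t+1)]$, hence inside the hope interval. Since $\mathcal{M}_1(t+1)$ is empty, every vertex of this path lies in $\mathcal{N}_1(t+1) \cup \mathcal{F}_1(t+1)$, forcing a transition edge from some ignorant $i \in \mathcal{N}_1(t+1)$ to some $j \in \mathcal{F}_1(t+1)$ within $\varepsilon$. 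Lemma \ref{lemma_two_step} then delivers $\ell_1(t+3) \le \ell_1(t+1)(1-\alpha\beta^2/12) \le \ell_1(t)(1-\alpha\beta^2/12)$.

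The main obstacle is this last step. The sorted-order trick is what keeps the path from $\hat{l}(t+1)$ to $\tilde{l}(t+1)$ inside the hope interval so that every intermediate vertex is classifiable by the $\mathcal{N}_1/\mathcal{M}_1/\mathcal{F}_1$ trichotomy; without it the path could a priori wander above $x_{\hat{u}(t+1)}(t+1)$ when $C_{\tilde{l}}$ and $C_{\tilde{u}}$ are distinct components, escaping the useful trichotomy and disabling the reduction to Lemma \ref{lemma_two_step}.
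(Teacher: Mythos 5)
Your proposal is correct and follows essentially the same strategy as the paper: reduce via Lemma \ref{lemma_no_near_truth_seeker} and Lemma \ref{lemma_one_step} to the situation $\mathcal{N}_k(t+1)\cap K=\emptyset$ and $\mathcal{M}_k(t+1)=\emptyset$, place the extremal truth seeker $\tilde{l}(t+1)$ in $\mathcal{F}_1(t+1)$, and then locate an edge of the confidence graph joining an ignorant in $\mathcal{N}_1(t+1)$ to an individual in $\mathcal{F}_1(t+1)$ so that Lemma \ref{lemma_two_step} applies. The only divergence is cosmetic: the paper finds that edge by a two-step greedy descent from $\tilde{l}(t+1)$ (via the ignorants $j_1$, $j_2$), whereas you walk up the opinion-sorted component from $\hat{l}(t+1)$ -- if anything the more carefully justified variant -- and your opening case distinction via Lemma \ref{lemma_epsilon_interval} is implicitly subsumed in the paper's argument.
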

\begin{proof}
  Due to Lemma \ref{lemma_no_near_truth_seeker} we can assume
  $\mathcal{N}_k(t+1)\cap K=\emptyset$. At time $t+1$ there must be a
  truthseeker $i$. Without loss of generality, we assume $x_i(t)\le h$
  and $i=\tilde{l}(t+1)$. Due to Lemma \ref{lemma_one_step} we can
  assume $i\in\mathcal{F}_1(t+1)$. Now let $j_1$ be the ignorant with
  smallest opinion fulfilling $d(i,j_1,t+1)\le\varepsilon$. If
  $j_1\in\mathcal{N}_1(t+1)$ then we can apply Lemma
  \ref{lemma_two_step} with $j_1$ and $i$. Otherwise we let $j_2$ be
  the ignorant with smallest opinion fulfilling
  $d(j_1,j_2,t+1)\le\varepsilon$. So we have
  $d(j_2,i,t+1)>\varepsilon$ and $j_2\in\mathcal{N}_1(t+1)$. Thus, we
  can apply Lemma \ref{lemma_two_step} with $j_2$ and $j_1$.
\end{proof}

\begin{lemma}
  \label{lemma_greater_epsilon}
  If $l_k(t)>\varepsilon$ and $t\ge T_1$ then we have
  $l_k(t+3)-\varepsilon\le (l_k(t)-\varepsilon)\cdot
  \left(1-\frac{\alpha\beta^2}{12}\right)$ or
  $l_k(t+3)\le\varepsilon$.
\end{lemma}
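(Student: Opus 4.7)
Without loss of generality take $k=1$, and set $c := \alpha\beta^2/12$. The guiding algebraic identity is
\begin{equation*}
  \ell_1(t)(1-c) - \varepsilon = (\ell_1(t) - \varepsilon)(1-c) - \varepsilon c \le (\ell_1(t) - \varepsilon)(1-c),
\end{equation*}
which shows that it is enough to prove the shrinkage $\ell_1(t+3) \le \ell_1(t)(1-c)$: if afterwards $\ell_1(t+3) \le \varepsilon$ we land in the second disjunct of the lemma, and otherwise the inequality above rearranges directly into the first disjunct $\ell_1(t+3) - \varepsilon \le (\ell_1(t)-\varepsilon)(1-c)$. The entire task therefore reduces to producing this side-specific shrinkage for the particular $k=1$ with $\ell_1(t) > \varepsilon$.

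Lemma \ref{lemma_one_shrinks} already supplies such a shrinkage for at least one of the two sides; the point is to pin it down to side $k=1$. I would re-run the case distinction of that proof under the strong hypothesis $\ell_1(t) > \varepsilon$, i.e.\ $x_{\hat{l}(t)}(t) < h - \varepsilon$. Because the lower extreme lies more than one confidence radius below the truth, the connected component $C_{\tilde{l}(t)}(t)$ cannot consist only of points clustered at $x_{\hat{l}(t)}(t)$: it must contain at least one bridging individual reaching upward toward $h$. Following exactly the trichotomy used in Lemma \ref{lemma_one_shrinks}, this bridging yields either (a) an individual in $\mathcal{M}_1(t)$, so Lemma \ref{lemma_one_step} applies, or (b) an ignorant in $\mathcal{N}_1(t)$ adjacent to some member of $\mathcal{F}_1(t)$, so Lemma \ref{lemma_two_step} applies, or (c) a truth seeker dropping into $\mathcal{N}_1(t+1)$, so Lemma \ref{lemma_no_near_truth_seeker} applies. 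In each branch one obtains the required $\ell_1(t+3) \le \ell_1(t)(1-c)$.

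The main obstacle is the degenerate situation $\tilde{l}(t)=0$, in which no truth seeker at all has opinion $\le h$ and so the lower hope interval is composed entirely of ignorants bridging $\hat{l}(t)$ to the dummy node $0$ representing the truth. Here the original WLOG inside Lemma \ref{lemma_one_shrinks} would naturally steer to the upper side (where the truth seekers live), and one must instead re-execute the $j_1,j_2$ bridge construction purely with ignorants on side $1$. The key check is that, even without a truth seeker on side $1$, the chain of ignorants from $\hat{l}(t)$ toward $h$ still deposits some ignorant into $\mathcal{M}_1(t+1)$, at which point Lemma \ref{lemma_one_step} closes the argument. Once this bookkeeping is in place, the algebraic identity of the first paragraph converts the shrinkage into the statement of the lemma.
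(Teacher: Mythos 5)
Your opening reduction aims at the wrong target. It is true that $\ell_1(t+3)\le\ell_1(t)\left(1-\frac{\alpha\beta^2}{12}\right)$ would imply the first disjunct via your algebraic identity, but that side-specific multiplicative contraction of $\ell_1$ itself is simply not available under the hypotheses; the lemma is phrased with the weaker ``the excess over $\varepsilon$ contracts'' conclusion for exactly this reason. Note that for $t\ge T_1$ we already have $\ell_1(t)\le\varepsilon+\frac{\varepsilon\alpha\beta}{12}$, so whenever $\ell_1(t)>\varepsilon$ the band $[h-\ell_1(t),\,h-\varepsilon]$ has width $\ell_1(t)-\varepsilon<\frac{\alpha\beta\ell_1(t)}{12}$ and therefore fits entirely inside the $\mathcal{N}_1$-band. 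The critical configuration is a tight cluster of ignorants occupying that band, attached to the hope interval only through the dummy vertex $0$ (its topmost member lies within $\varepsilon$ of $h$), with no member of $\mathcal{M}_1$ and no adjacency to $\mathcal{F}_1$. Here none of your branches (a)--(c) fires, and the ``key check'' in your last paragraph --- that the chain of ignorants deposits someone into $\mathcal{M}_1(t+1)$ --- is false: the cluster can contract and remain strictly inside $\mathcal{N}_1(t+1)$, so Lemma \ref{lemma_one_step} never becomes applicable, and $\ell_1$ decreases only by an amount of order $\ell_1-\varepsilon$, not of order $\ell_1$.

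What actually closes this case (and is the computational heart of the paper's proof) is the observation that the topmost ignorant $i$ of $\mathcal{N}_1(t+1)$ satisfies $d(\hat{l}(t+1),i,t+1)\ge\ell_1(t+1)-\varepsilon$ because $x_i(t+1)\ge h-\varepsilon$; hence every member of the cluster, in particular the one realizing the new minimum, is pulled up by at least $\beta\bigl(\ell_1(t+1)-\varepsilon\bigr)$ in one further step, giving $\ell_1(t+2)-\varepsilon\le(1-\beta)\bigl(\ell_1(t+1)-\varepsilon\bigr)$ and hence the first disjunct. Your proposal never produces this estimate, and re-running the trichotomy of Lemma \ref{lemma_one_shrinks} cannot produce it, because that trichotomy is blind to the vertex-$0$ bridge. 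You would need to add this fourth case explicitly and carry out the displacement computation for it.
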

\begin{proof}
  Due to Lemma \ref{lemma_no_near_truth_seeker}, we can assume
  $\mathcal{N}_k(t+1)\cap K=\emptyset$ and, due to Lemma
  \ref{lemma_one_step}, we can assume
  $\mathcal{M}_k(t+1)=\emptyset$. Due to symmetry, we only consider the
  case $k=1$.  Let $i\in\mathcal{N}_1(t+1)$ the ignorant with largest
  opinion $x_i(t+1)$, meaning that $d(\hat{l}(t+1),i,t+1)$ is
  maximal. 

  If there exists an individual $j\in\mathcal{F}_1(t+1)$ with
  $d(i,j,t+1)\le\varepsilon$, then we can apply Lemma
  \ref{lemma_two_step}. If no such individual $j$ exists then we must
  have $d(i,0,t+1)\le\varepsilon$ or $\ell_1(t+1)=0$. So only the first
  case remains. We set $\delta=d(\hat{l}(t+1),i,t+1)\ge
  \varepsilon-\ell_1(t+1)$. Let $h\in\mathcal{N}_1(t+1)$ be an ignorant
  with $x_h(t+1)=x_{\hat{l}(t+1)}(t+1)+\mu$, where
  $0\le\mu\le\delta$. For time $t+2$ we get
  \begin{eqnarray*}
    x_h(t+2) &\ge& x_{\hat{l}(t+1)}(t+1)+\mu -(1-2\beta)\mu+\beta(\delta-\mu)\\
             &\ge& x_{\hat{l}(t+1)}(t+1)+\beta\delta\\
             &\ge& x_{\hat{l}(t+1)}(t+1)+\beta(\varepsilon-\ell_1(t+1)).
  \end{eqnarray*}
\end{proof}

From Lemma \ref{lemma_one_shrinks} and Lemma
\ref{lemma_greater_epsilon} we conclude:

\begin{corollary}
  \label{lemma_everything_small}
  There exists a finite number $T_2(\varepsilon,n,\alpha,\beta)$ so that we have
  $$
    \ell_1(t)+\ell_2(t)\le\varepsilon+\frac{\varepsilon\alpha^2\beta^3}{60},\quad\text{and}\quad
    \min(\ell_1(t),\ell_2(t))\le\frac{\varepsilon\alpha^2\beta^3}{60}
  $$
  for all $t\ge T_2(\varepsilon,n,\alpha,\beta)$.
\end{corollary}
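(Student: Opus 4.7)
The plan is to combine Lemmas~\ref{lemma_greater_epsilon} and \ref{lemma_one_shrinks} with the monotonicity of $\ell_1,\ell_2$ coming from Lemma~\ref{lemma_decreasing}. Write $c:=\frac{\alpha\beta^2}{12}$ and fix the tolerance $m:=\frac{\varepsilon\alpha^2\beta^3}{120}$, so that $2m$ matches the slack $\frac{\varepsilon\alpha^2\beta^3}{60}$ appearing in the statement.

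In a first phase I would apply Lemma~\ref{lemma_greater_epsilon} separately to each $\ell_k$ in three-step windows starting at $T_1$. Each window either already gives $\ell_k\le\varepsilon$ or contracts the excess $\ell_k-\varepsilon$ by the factor $(1-c)$. Since this excess is initially at most $\frac{\varepsilon\alpha\beta}{12}$ and shrinks geometrically, a finite, structurally bounded number of windows yields a time $T'$ with $\ell_k(t)\le\varepsilon+m$ for both $k\in\{1,2\}$; the monotonicity $\ell_k(t+1)\le\ell_k(t)$ from Lemma~\ref{lemma_decreasing} then keeps this bound in force for all $t\ge T'$.

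In a second phase, starting from $T'$, I would iterate Lemma~\ref{lemma_one_shrinks} in three-step windows. As long as $\ell_1(t),\ell_2(t)>m$ both hold, each window contracts one of the two sides by the factor $(1-c)$, so it lowers $\ell_1+\ell_2$ by at least $cm$. Since $\ell_1+\ell_2$ is nonnegative and starts at most $2(\varepsilon+m)$, this can occur at most $\lceil 2(\varepsilon+m)/(cm)\rceil$ times, so in finitely many windows we reach a time $T_2$ at which $\min(\ell_1(T_2),\ell_2(T_2))\le m$. Whichever index $k$ first drops its $\ell_k$ below $m$ keeps $\ell_k\le m$ forever by Lemma~\ref{lemma_decreasing}, so $\min(\ell_1(t),\ell_2(t))\le m\le\frac{\varepsilon\alpha^2\beta^3}{60}$ for all $t\ge T_2$; combined with the first-phase bound on the other side this yields $\ell_1(t)+\ell_2(t)\le\varepsilon+2m=\varepsilon+\frac{\varepsilon\alpha^2\beta^3}{60}$.

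The only subtle point in this outline is that Lemma~\ref{lemma_one_shrinks} does not specify \emph{which} side shrinks in a given window, so in principle the role of the smaller side could oscillate between $\ell_1$ and $\ell_2$; monotonicity resolves this by locking in whichever side first dips below $m$. Beyond this bookkeeping I expect no further obstacle, and the resulting $T_2(\varepsilon,n,\alpha,\beta)$ is simply $T'$ plus three times the window count in phase two, both of which depend only on the structural parameters.
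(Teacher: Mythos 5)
Your argument is correct and follows exactly the route the paper intends: the paper offers no written proof beyond the assertion that the corollary follows from Lemmas~\ref{lemma_one_shrinks} and~\ref{lemma_greater_epsilon}, and your two-phase bookkeeping (geometric decay of the excess over $\varepsilon$ via Lemma~\ref{lemma_greater_epsilon}, then a potential-function count on $\ell_1+\ell_2$ via Lemma~\ref{lemma_one_shrinks}, with Lemma~\ref{lemma_decreasing} locking in whichever side first drops below the threshold) is a valid way to fill that in. Your bound on the number of phase-two windows is linear in $1/(cm)$ and hence cruder than the paper's remarked $T_2=T_1+\frac{36}{\alpha^2\beta^4}$ (which one would get by tracking how often \emph{each} $\ell_k$ is contracted geometrically), but the corollary only asserts finiteness, so this is immaterial.
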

We would like to remark that, e.g.{},
$T_2(\varepsilon,n,\alpha,\beta)=T_1(\varepsilon,n,\alpha,\beta)+\frac{36}{\alpha^2\beta^4}$
suffices.

\begin{lemma}
  \label{lemma_final}
  For each $t\ge T_2(\varepsilon,n,\alpha,\beta)$ we have
  $$
    \ell_1(t+3)+\ell_2(t+3)\le\varepsilon
  $$
  or
  $$
    d(k,0,t)\le\frac{\varepsilon\alpha^2\beta^3}{60}\cdot\left(1-\frac{\alpha\beta}{2}\right)^{\left\lfloor\frac{t-T_2}{2}\right\rfloor}
  $$
  for all truth seekers $k\in K$.
\end{lemma}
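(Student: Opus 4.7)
I proceed by case analysis on whether $\ell_1(t+3)+\ell_2(t+3)\le\varepsilon$. If so, the first alternative holds. Otherwise, Lemma~\ref{lemma_decreasing} (monotonicity of $\ell_1$ and $\ell_2$) forces $\ell_1(s)+\ell_2(s)>\varepsilon$ for all $s\in[T_2,t+3]$; combined with $\min(\ell_1(s),\ell_2(s))\le\frac{\varepsilon\alpha^2\beta^3}{60}$ from Corollary~\ref{lemma_everything_small}, and using monotonicity again to pin down which side is tiny, I may assume by symmetry that $\ell_1(s)\le\frac{\varepsilon\alpha^2\beta^3}{60}$ while $\ell_2(s)>\varepsilon-\frac{\varepsilon\alpha^2\beta^3}{60}$ throughout $[T_2,t]$.

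Truth seekers $k$ with $x_k(t)\le h$ then satisfy the bound trivially from $x_k(t)\ge x_{\hat{l}(t)}(t)=h-\ell_1(t)\ge h-\frac{\varepsilon\alpha^2\beta^3}{60}$, since the geometric factor in the second alternative is at most~$1$. The heart of the proof is the bound for upper truth seekers, i.e.\ those with $x_k(t)>h$, which I would establish in two stages. The first is a base-case bound: if an upper truth seeker had positive deviation $\delta_k:=x_k(s)-h\ge\ell_2(s)-\varepsilon$ at some $s\in[T_2,t]$, it would lie in $\hat{u}(s)$'s confidence set, and its weight-$\beta$ contribution to $\bar{x}_{\hat{u}(s)}$ would shrink $\ell_2$ per step by at least $\beta(\ell_2(s)-\delta_k)$. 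Iterated a finite number of times this would drop $\ell_1+\ell_2$ strictly below $\varepsilon$, restoring the first alternative and contradicting the assumption. Hence every upper truth seeker satisfies $\delta_k<\ell_2(s)-\varepsilon\le\frac{\varepsilon\alpha^2\beta^3}{60}$ on $[T_2,t]$.

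The second stage is a geometric contraction: with $\delta_k<\frac{\varepsilon\alpha^2\beta^3}{60}$ and $\ell_1$ tiny, the upper truth seeker sits near the collapsed lower end of the hope interval, so its confidence set contains an individual near~$h$ with weight at least~$\beta$. A calculation in the spirit of Lemma~\ref{lemma_epsilon_interval} combined with the elementary inequality $(1-\alpha)(1-\beta)\le 1-\tfrac{\alpha\beta}{2}$ (valid for $\alpha\in(0,1]$, $\beta\in(0,\tfrac12]$) gives per-step contraction of $\delta_k$ by at least $(1-\alpha)(1-\beta)$, so every two steps contracts by at least $1-\tfrac{\alpha\beta}{2}$. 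Iterating $\lfloor(t-T_2)/2\rfloor$ two-step blocks from the base-case bound yields the second alternative. The main obstacle is the base-case argument: one must carefully quantify that the truth seeker's presence in the upper cluster's confidence set produces enough cumulative downward drift of $\ell_2$ to force the sum $\ell_1+\ell_2$ below~$\varepsilon$ in boundedly many steps, despite the compensating pull of upper ignorants.
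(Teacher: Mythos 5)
Your overall dichotomy and the reduction to the case where $\ell_1$ is tiny and every truth seeker is far from the upper cluster do match the paper's strategy (the paper gets there via Lemmas~\ref{lemma_one_step} and~\ref{lemma_no_near_truth_seeker}: if $\mathcal{M}_2(t+r)\neq\emptyset$ or $\mathcal{N}_2(t+r)\cap K\neq\emptyset$ for $r\in\{0,1\}$ the first alternative is forced, otherwise $K\subseteq\mathcal{F}_2(t+r)$). But there are two genuine gaps. First, your treatment of the lower truth seekers is logically backwards: the second alternative demands $d(k,0,t)\le\frac{\varepsilon\alpha^2\beta^3}{60}\cdot\left(1-\frac{\alpha\beta}{2}\right)^{\left\lfloor (t-T_2)/2\right\rfloor}$, and since the geometric factor is at most $1$ this is a \emph{stronger} requirement than $d(k,0,t)\le\frac{\varepsilon\alpha^2\beta^3}{60}$. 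The static bound $x_k(t)\ge h-\ell_1(t)\ge h-\frac{\varepsilon\alpha^2\beta^3}{60}$ therefore does not ``trivially'' give the claim; the lower truth seekers need the same decaying bound as the upper ones.

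Second, the contraction mechanism you propose for the upper truth seekers does not hold as stated. A truth seeker $k\in\mathcal{F}_2$ updates to $\alpha_k(t)h+(1-\alpha_k(t))\bar x$, where $\bar x$ averages over its confidence set, which in general contains \emph{ignorants} of $\mathcal{F}_2$ sitting near $h$; their deviations from $h$ do not contract per step on their own, so a per-step contraction of $\delta_k$ by $(1-\alpha)(1-\beta)$ cannot be concluded. The deviation of the cluster $\mathcal{F}_2$ must be contracted as a coupled system: this is exactly what Lemma~\ref{lemma_epsilon_interval} provides (a \emph{two-step} contraction of the sum of the two side lengths by $1-\frac{\alpha\beta}{2}$, driven by the truth seekers pulling the ignorants along), and it is applicable once one checks --- via Lemma~\ref{lemma_two_step} --- that $\mathcal{F}_2$ is not influenced by the residual upper cluster $\mathcal{N}_2$ (otherwise the first alternative again holds). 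Using Lemma~\ref{lemma_epsilon_interval} on all of $\mathcal{F}_2$ also repairs the first gap, since it controls both sides of the small cluster simultaneously. Finally, your ``iterated a finite number of times'' in the base case must be quantified to fit the three-step window of the first alternative; by Corollary~\ref{lemma_everything_small} the sum $\ell_1(t)+\ell_2(t)$ exceeds $\varepsilon$ by a margin of order $\varepsilon\alpha^2\beta^3$ only, so a single application of Lemma~\ref{lemma_one_step} or Lemma~\ref{lemma_no_near_truth_seeker} already suffices, but this needs to be said.
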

\begin{proof}
  Without loss of generality, we assume
  $\ell_1(t)\le\frac{\varepsilon\alpha^2\beta^3}{60}$ and prove the
  statement by induction on $t$. Due to Lemma \ref{lemma_one_step} and
  Lemma \ref{lemma_no_near_truth_seeker}, we can assume
  $\mathcal{M}_2(t+r)=\mathcal{N}_2(t+r)\cap K=\emptyset$ for
  $r\in\{0,1\}$ since otherwise we would have
  $\ell_1(t+3)+\ell_2(t+3)\le\varepsilon$. Thus, we have
  $d(k,0,t+r)\le\frac{\varepsilon\alpha^2\beta^3}{60}$ for all $k\in
  K$ and $K\subseteq \mathcal{F}_2(t+r)$.

  Due to Lemma \ref{lemma_one_step} for $r\in\{0,1\}$, the individuals
  in $\mathcal{F}_2(t+r)$ are not influenced by the individuals in
  $\mathcal{F}_2(t+r)$ since otherwise we would have
  $\ell_1(t+3)+\ell_2(t+3)\le\varepsilon$. Thus, we can apply Lemma
  \ref{lemma_epsilon_interval} for the individuals in
  $\mathcal{F}_2(t)$.
\end{proof}


From the previous lemmas we can conclude Theorem \ref{main_result} and
Theorem \ref{thm_interupted_convergent}.
\begin{proof}(Proof of Theorems~\ref{main_result} and~\ref{thm_interupted_convergent}.)
  After a finite time $T_2(\varepsilon,n,\alpha,\beta)$ we are in a
  \textit{nice} situation as described in Lemma
  \ref{lemma_everything_small}. If we have
  $\ell_1(T_2+3)+\ell_2(T_2+3)\le\varepsilon$ then we have an ordinary
  convergence of the truth seekers being described in Lemma
  \ref{lemma_epsilon_interval}. Otherwise we have
  $d(k,0,T_2)\le\frac{\varepsilon\alpha^2\beta^3}{60}$ for all truth
  seekers $k\in K$. Due to Lemma \ref{lemma_final} and Lemma
  \ref{lemma_epsilon_interval} either we have
$$
d(k,0,t)\le\frac{\varepsilon\alpha^2\beta^3}{60}\cdot\left(1-\frac{\alpha\beta}{2}\right)^{\left\lfloor\frac{t-T_2}{2}\right\rfloor}
$$
for all truth seekers $k\in K$ and all $t\ge T_2$, or there exists an
$S\in\mathbb{N}$, such that we have
\begin{enumerate}
\item[(1)]
  $d(k,0,t)\le\frac{\varepsilon\alpha^2\beta^3}{60}\cdot\left(1-\frac{\alpha\beta}{2}\right)^{\left\lfloor\frac{t-T_2}{2}\right\rfloor}$
  for all $T_2\le t\le S$,
\item[(2)]
  $d(k,0,t)\le\varepsilon\left(1-\frac{\alpha\beta}{2}\right)^{\left\lfloor\frac{t-S-3}{2}\right\rfloor}$
  for all $t\ge S+3$,
\end{enumerate}
for all $k\in K$.  The latter case is $1$-fold interrupted
convergence. Thus, the Hegselmann-Krause Conjecture is proven.
\end{proof}

\section{Remarks}

In this section we would like to generalize the Hegselmann-Krause
Conjecture and show up which requirements can not be weakened.

\begin{lemma}
  A finite number $n$ of individuals and symmetric confidence
  intervals are necessary for a convergence of the truth seekers.
\end{lemma}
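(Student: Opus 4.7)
The plan is to prove the lemma by exhibiting two explicit counterexamples, one violating finiteness and one violating symmetry of the confidence intervals, in each of which some truth seeker demonstrably fails to approach $h$.

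For the necessity of symmetric confidence, I would take the minimal asymmetric system: $n=2$, truth $h=0$, a truth seeker $1$ with a wide individual confidence radius $\varepsilon_1$, and an ignorant $2$ with confidence radius $\varepsilon_2=0$ (so that $1 \in I_2(t)$ but $2 \notin I_1(t)$ is avoided; what we actually want is the reverse, so pick $\varepsilon_1$ large enough to contain $x_2(0)=d>0$ while $\varepsilon_2$ is too small to contain $x_1(0)=0$). Then the ignorant sees only itself and remains at $d$ forever, while the truth seeker iterates $x_1(t+1)=(1-\alpha)(x_1(t)+d)/2$. Solving the resulting linear recurrence, $x_1(t)\to (1-\alpha)d/(1+\alpha)$, which is strictly positive. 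This shows that dropping symmetry of the confidence relation already destroys convergence to the truth; all the main result's dynamics arguments (in particular Lemma~\ref{lemma_decreasing}) rest on the fact that $j\in I_i^\varepsilon(t)\Leftrightarrow i\in I_j^\varepsilon(t)$, and this example pinpoints exactly where.

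For the necessity of finiteness, I would take countably many ignorants all placed at position $d>0$, a single truth seeker at $0$ with $h=0$, and bounded-confidence radius $\varepsilon \ge d$. Using the natural uniform-weight extension $\beta_{ij}(t)=1/|I_i^\varepsilon(t)|$, the averaged opinion in the confidence set of any ignorant is $d$ (the truth seeker contributes weight $1/\infty = 0$), so every ignorant stays fixed at $d$. The truth seeker then iterates $x_1(t+1)=\alpha \cdot 0 +(1-\alpha)\cdot d$ and converges to $(1-\alpha)d \neq 0$. This yields the desired contradiction to convergence. The same effect can be obtained with any infinite cluster of ignorants whose combined mass drowns out the truth seeker's pull.

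The main obstacle is the infinite case, because the definition of a (WASBOCOS) has the built-in lower bound $\beta_{ij}\ge\beta>0$ with $\sum_j\beta_{ij}=1$, which forces each confidence set to contain at most $1/\beta$ agents. Thus, strictly speaking, an infinite population is incompatible with the model, and one has to choose a canonical infinite extension (e.g.\ uniform weights on the confidence set, or a measure-theoretic averaging). Once this choice is made, the example above is immediate. I would therefore state the finiteness part as: in every reasonable extension of the model that allows infinitely many individuals, the uniform weight lower bound must be relaxed, and the construction above then produces a non-convergent truth seeker. This phrasing isolates the crucial point: the ignorants collectively can ignore the truth seeker arbitrarily strongly, whereas in the finite model the $\beta$-bound forces every member of a confidence set to have a tangible reciprocal influence.
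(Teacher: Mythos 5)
Your proposal is correct and takes the same route as the paper: the paper's own proof is only a two-sentence sketch asserting that infinitely many ignorants and asymmetric confidence relations ``clearly'' allow counterexamples, and you have simply made both counterexamples explicit (which is arguably an improvement). One small point to tidy in the asymmetric example: besides having the ignorant fail to see the truth seeker initially, you need it to \emph{never} see the truth seeker as $x_1(t)$ drifts up toward the fixed point $(1-\alpha)d/(1+\alpha)$, i.e.\ you need $\varepsilon_2<2\alpha d/(1+\alpha)$; your choice $\varepsilon_2=0$ handles this, so just drop the confusing parenthetical. Your observation that an infinite population is formally incompatible with the bound $\beta_{ij}(t)\ge\beta>0$ and therefore requires choosing an extension of the model is a legitimate subtlety that the paper glosses over entirely.
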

\begin{proof}
  Infinitely many ignorants can clearly hinder a truth seeker in
  converging to the truth. If the confidence intervals are not
  symmetric then it is easy to design a situation where some ignorants
  are influencing a truth seeker which does not influence the
  ignorants, so that the truth seeker has no chance to converge to the
  truth.
\end{proof}

\begin{lemma}
  The condition $\beta_{ij}(t)\ge\beta>0$ is necessary for a
  convergence of the truth seekers.
\end{lemma}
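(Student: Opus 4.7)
The plan is to exhibit a minimal explicit counterexample showing that the conclusion of Theorem~\ref{main_result} can fail once the uniform lower bound $\beta_{ij}(t)\ge\beta>0$ is dropped. I would work with $n=2$, with individual~$1$ a truth seeker ($\alpha_1(t)=\alpha$) and individual~$2$ an ignorant ($\alpha_2(t)=0$), and pick $\varepsilon>0$, $h\in(0,1)$, together with initial opinions so that $|x_1(0)-x_2(0)|\le\varepsilon$ and $x_2(0)\ne h$; say $x_1(0)=0$, $x_2(0)=\varepsilon$, and $h=\varepsilon/2$.

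The key device is to assign degenerate weights that violate the lower bound: set $\beta_{22}(t)=1$ and $\beta_{21}(t)=0$, so that the ignorant completely ignores the truth seeker, and set $\beta_{11}(t)=0$ and $\beta_{12}(t)=1$, so that the truth seeker places all of its non-truth weight on the ignorant. Then $x_2(t)=x_2(0)$ for every $t$, and the recursion for the truth seeker collapses to $x_1(t+1)=\alpha h+(1-\alpha)x_2(0)$, so that $x_1(t)=\alpha h+(1-\alpha)x_2(0)$ for all $t\ge 1$. This limit differs from $h$ whenever $x_2(0)\ne h$, contradicting convergence to the truth. A one-line check confirms that $|x_1(t)-x_2(t)|=\alpha|h-x_2(0)|\le\varepsilon$ at every step, so the two individuals remain mutually in each other's confidence interval throughout and the derivation is valid.

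If one prefers strictly positive weights whose infimum is merely zero (rather than weights actually equal to zero), I would regularize by replacing the degenerate choices above by sequences $a_t\searrow 0$ decaying fast enough that $\prod_{t}(1-a_t)>0$; a routine limit argument then shows that $x_2(t)$ converges to some $c\ne h$ and the truth seeker is dragged to $\alpha h+(1-\alpha)c\ne h$. There is no serious technical obstacle beyond checking that the two confidence intervals never decouple; the substantive point of the lemma is that the uniform lower bound $\beta$ is precisely what prevents an ignorant from ``freezing'' and trapping a truth seeker permanently off the truth.
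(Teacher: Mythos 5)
Your construction is essentially the paper's: two individuals, one truth seeker and one ignorant within confidence distance, where the ignorant's weight on the truth seeker is made so small that the ignorant never approaches the truth and the truth seeker is permanently dragged off it. The one point to fix is emphasis. The headline version of your argument sets $\beta_{21}(t)=\beta_{11}(t)=0$, which only refutes the trivial reading of the lemma (that \emph{some} positivity is needed); the actual content of the statement is that even requiring $\beta_{ij}(t)>0$ for all $i,j,t$ is not enough without a uniform lower bound. That content is carried entirely by your final paragraph, which should therefore be the proof rather than an optional regularization. The paper does exactly this, taking $\beta_{21}(t)=\beta_{11}(t)=\left(\tfrac12\right)^{t+1}$ (a summable sequence, i.e.\ your condition $\prod_t(1-a_t)>0$) with $h=1$, $x_1(0)=1-\tfrac15\varepsilon$, $x_2(0)=1-\varepsilon$, and concludes $|x_1(t)-h|\ge\tfrac12\varepsilon$ for $t\ge1$. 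Your sketch of the summable case is correct and routine to complete (the ignorant's total displacement is at most $\varepsilon\sum_t a_t$, so it stays bounded away from $h$, and the truth seeker's recursion then has a limit that is a nontrivial convex combination of $h$ and the ignorant's limit), and your check that the two agents never decouple is the right thing to verify; just promote that paragraph from an afterthought to the main argument.
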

\begin{proof}
  If we only require $\beta_{ij}(t)>0$, then we have the following
  example: $n=2$, $x_1(0)=1-\frac{1}{5}\varepsilon$,
  $x_2(0)=1-\varepsilon$, $\alpha_1(t)=\frac{1}{5}$, $\alpha_2(t)=0$,
  $\beta_{11}(t)=\left(\frac{1}{2}\right)^{t+1}$,
  $\beta_{12}(t)=1-\left(\frac{1}{2}\right)^{t+1}$,
  $\beta_{21}(t)=\left(\frac{1}{2}\right)^{t+1}$,
  $\beta_{22}(t)=1-\left(\frac{1}{2}\right)^{t+1}$, and $h=1$. By a
  straight forward calculation we find that
  $|x_1(t)-h|\ge\frac{1}{2}\varepsilon$ for $t\ge 1$.
\end{proof}

We remark that conditions like $\beta_{ij}(t)+\beta_{ij}(t+1)\ge
2\beta$ would also not force a convergence of the truth seekers in
general. One might consider an example consisting of two ignorants
with starting positions $h\pm\frac{7}{10}\varepsilon$ and a truth
seeker $k$ with starting position $h-\frac{1}{5}\varepsilon$. We may
choose suitable $\beta_{ij}(t)$ and $\alpha_i(t)$ so that we have
$|h-x_k(t)|\ge\frac{1}{5}\varepsilon$ for all $t$,
$h-x_k(t)\ge\frac{1}{5}\varepsilon$ for even $t$ and
$x_k(t)-h\ge\frac{1}{5}\varepsilon$ for odd $t$.

\medskip

For the next lemma we need a generalization of Definition
\ref{def_interupted_convergent}.

\begin{definition}
  Given $\varepsilon$, $\alpha$, $\beta$, $n$, we say that the truth
  seekers $k \in K$ are $r$-fold interrupted convergent, if for each
  $\gamma > 0$ there exists $r+1$ functions
  $T_i^s(\gamma,\varepsilon,\alpha,\beta,n,T_{i-1}^e)$,
  $i=1,\dots,r+1$, so that for each (WASBOCOS) $\Omega$ with structural
  parameters $\varepsilon$, $\alpha$, $\beta$ and $n$ there exist
  $T_i^e\in\mathbb{N}$, $i=1,\dots,r$ satisfying
  $$
    \forall k\in K,\,\forall t\in[T_i^s(\gamma,\varepsilon,\alpha,\beta,n,T_{i-1}^s),T_i^e]:\, |x_k(t)-h|<\gamma
  $$
  for $i=1,\dots,r$, where $T_0^e=0$, and 
  $$
    \forall k\in K,\,\forall t\ge T_{r+1}^s(\gamma,\varepsilon,\alpha,\beta,n,T_r^e):\,|x_k(t)-h|<\gamma.
  $$
\end{definition}

\begin{lemma}
  The condition $\alpha_i(t)=0$ for all $i\in\overline{K}$ is
  necessary for Theorem \ref{thm_interupted_convergent}.  If it is
  dropped then the truth seekers are not ($|\overline{K}|-1$)-fold
  convergent in general.
\end{lemma}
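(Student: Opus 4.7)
The plan is to generalise Example~\ref{ex_interupted_convergent} to exhibit, for any $m := |\overline{K}|$, a WASBOCOS whose single truth seeker suffers $m$ distinct distraction events of amplitude bounded below by some fixed $\gamma_0 > 0$, separated by recovery phases during which it lies inside the $\gamma_0$-ball of $h$.  Since the $(m-1)$-fold definition supplies only $m$ covering intervals of which one (the tail) must extend to infinity past every distraction, and since the distractions in the construction can be pushed arbitrarily late in time, no choice of the universal functions $T_1^s,\dots,T_m^s$ can accommodate all $m$ distractions.  The first statement of the lemma is then the $m \ge 2$ special case, which already contradicts Theorem~\ref{thm_interupted_convergent}.

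Concretely, I would place $x_k(0) = h$ with $\alpha_k(t) \equiv \alpha$, and ignorants $I_1,\dots,I_m$ at well-separated initial positions $x_{I_j}(0) = h + C_j$ satisfying $C_j > \varepsilon/(1-\alpha)$ and $|C_i - C_j| > \varepsilon$, so initially no pair of individuals sees one another.  For each $j$ I assign a disjoint activation window $[S_j, T_j]$ on which $\alpha_{I_j}(t) := \alpha$, with $\alpha_{I_j}(t) := 0$ elsewhere; since $\alpha_{I_j}$ vanishes at some times, $I_j$ remains in $\overline{K}$ under the relaxed regime.  During its window $I_j$ is isolated in its own confidence set (the unactivated ignorants and the cluster near $h$ are all at distance $> \varepsilon$ from $I_j$'s position, which ranges in $(\varepsilon/(1-\alpha), C_j]$), so by Lemma~\ref{lemma_lonely} applied with rate $\alpha$ its opinion contracts geometrically toward $h$; choosing the window length makes $I_j$ first cross into $h + \varepsilon$ exactly at step $D_j := T_j + 2$.

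At step $D_j$ the cluster $\{k, I_1, \dots, I_{j-1}\}$ (which by induction is concentrated near $h$) absorbs $I_j$ and, under uniform $\beta_{ij}$, the truth seeker is deflected to $h + (1-\alpha)\varepsilon/(j+1)$: a distraction bounded below by $\gamma_0 := (1-\alpha)\varepsilon/(2(m+1))$ uniformly in $j$.  The newly enlarged cluster then lies in an interval of length $\le \varepsilon$, so Lemma~\ref{lemma_epsilon_interval} forces geometric reconvergence of the truth seeker into the $\gamma_0$-ball of $h$ in $O(\log(1/\gamma_0)/(\alpha\beta))$ steps; I then wait that long before activating $I_{j+1}$.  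To defeat an adversarial choice of the universal functions $T_1^s,\dots,T_m^s$, I postpone the windows recursively so that $D_j > T_j^s(\gamma_0,\varepsilon,\alpha,\beta,n,D_{j-1})$ for every $j$, which is feasible because $S_j$ (and if needed $C_j$) may be taken arbitrarily large without disturbing the other invariants.  Then any admissible $T_{j-1}^e$ lies in a close region ending strictly before $D_{j-1}$, so $T_j^s \le T_j^s(\ldots,D_{j-1}-1) < D_j$, and in particular $T_m^s < D_m$; hence $[T_m^s,\infty)$ contains the $m$-th distraction and the $(m-1)$-fold definition fails.  The main obstacle I foresee is quantitative uniformity: making the deflection constant $j$-independent in the presence of the growing absorbed cluster, which is cleanly handled by the explicit $1/(j+1) \ge 1/(m+1)$ lower bound, and bounding the recovery time between windows, which reduces to Lemma~\ref{lemma_epsilon_interval}'s geometric rate $(1-\alpha\beta/2)$.
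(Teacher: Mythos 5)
Your construction is essentially the paper's own proof: a single truth seeker together with $|\overline{K}|$ pairwise-isolated relaxed ignorants, each activated in turn via a temporarily positive $\alpha_i(t)$ so that it marches to within $\varepsilon$ of the truth seeker and causes one deflection, with the activation windows postponed recursively to defeat any candidate functions $T_i^s$ (the paper places the ignorants below $h$ and merges each absorbed ignorant exactly onto the truth seeker's opinion rather than invoking Lemma~\ref{lemma_epsilon_interval} for reconvergence, but that is cosmetic). Two details to tighten: the $C_j$ must increase with the activation order, since otherwise an activated ignorant sweeps within $\varepsilon$ of a dormant one and your isolation claim fails; and in the adversarial timing step you should bound $T_j^s$ by its maximum over the finitely many admissible values of $T_{j-1}^e$ rather than by $T_j^s(\ldots,D_{j-1}-1)$, because monotonicity of these functions is not assumed.
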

\begin{proof}
  At first we remark that it clearly suffices to have $\alpha_i(t)=0$
  for all $i\in\overline{K}$ only for all $t\ge T$, where $T$ is a fix
  integer. W.l.o.g.{} we assume $T=0$ and consider the following
  example: $h=1$, $x_i(0)=1-2i\varepsilon$, $1\in K$, $1\neq
  i\in\overline{K}$, $\beta_{ij}(t)=\beta$, $\alpha_i(t)=\alpha$ for
  the truth seekers, and $\alpha_i(t)=0$ for the ignorants until we
  say otherwise. Let there be a given $\gamma>0$ being sufficiently
  small. There exists a time $T_1$ until $x_1(T_1)<1-\gamma$. Up to
  this time no other individual has changed its opinion. After time
  $T_1+1$ we suitably choose $\alpha_2(t)$ so that we have
  $\frac{1}{2}\varepsilon\le
  x_1(\tilde{T}_1)-x_2(\tilde{T}_1)\le\varepsilon$. So at time
  $\tilde{T}_1+1$ the convergence of truth seeker $1$ is interrupted
  the first time. After that we may arrange it that $x_1$ and $x_2$
  get an equal opinion and will never differ in there opinion in the
  future. Now there exists a time $T_2$ until
  $x_2(T_2)=x_1(T_2)<1-\gamma$ and we may apply our construction
  described above again. Thus, every ignorant $i\in\overline{K}$ may
  cause an interruption of the convergence of the truth seekers.
\end{proof}

\begin{conjecture}
  If we drop the condition $\alpha_i(t)=0$ for all $i\in\overline{K}$
  in Theorem \ref{thm_interupted_convergent} then we have
  ($|\overline{K}|$)-fold convergence of the truth seekers.
\end{conjecture}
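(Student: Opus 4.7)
The plan is to prove the conjecture by strong induction on $|\overline{K}|$. The base case $|\overline{K}|=0$ places every individual in $K$, so the dynamics reduces to the classical all-truth-seeker Hegselmann--Krause system, which converges to consensus on $h$; this is $0$-fold interrupted convergence and takes care of the base.

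For the inductive step, fix $m\ge 1$, assume the conjecture for every (WASBOCOS) with at most $m-1$ ignorants, and treat one with $|\overline{K}|=m$. I would first verify that the lemmas of Section~4 only ever use the instantaneous value $\alpha_i(t)$, never the constancy of $\alpha_i(\cdot)$. In particular Lemma~\ref{lemma_decreasing} still yields that the hope-interval endpoints $x_{\hat{l}(t)}(t)$ and $x_{\hat{u}(t)}(t)$ are monotone, Lemmas~\ref{lemma_middle}--\ref{lemma_final} still bound the time to reach $\ell_1(t)+\ell_2(t)\le\varepsilon+O(\varepsilon\alpha\beta)$ by a function of the structural parameters alone, and inside that regime Lemma~\ref{lemma_epsilon_interval} still gives exponential contraction.

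Call a time $\tau$ an \emph{interruption} if some $k\in K$ with $|x_k(\tau-1)-h|<\gamma$ has $|x_k(\tau)-h|\ge\gamma$. Careful inspection of~\eqref{eq_update} reveals that such a jump can only be triggered when some individual $j$, previously outside the truth seekers' confidence radius, arrives at an opinion within $\varepsilon$ of some truth seeker while still far from $h$; an enlarged variant of Lemma~\ref{lemma_lonely} shows that a truth seeker cannot be the trigger, so $j\in\overline{K}$, and we \emph{charge} the interruption to $j$. The crux is that no ignorant is charged twice: once $j$ has been charged, it lies in the confidence radius of the truth seekers, and the machinery of Lemmas~\ref{lemma_one_step}--\ref{lemma_final}, applied to the sub-population $K\cup\{j\}$, re-contracts the hope interval around $h$, preventing $j$ from being the source of a second charge. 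Summing, the number of interruptions is bounded by $|\overline{K}|=m$, which is precisely the claimed $m$-fold interrupted convergence.

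The hardest technical point I expect is the no-double-charging claim. The danger is that $j$ may later toggle back to ignorant status, and during ignorant phases other still-uncharged ignorants may drag $j$ outward again. My plan to control this is to re-label $j$ as a truth seeker in a surrogate system that has only $m-1$ genuine ignorants and invoke the inductive hypothesis, obtaining $(m-1)$-fold convergence for the surrogate; the two systems can then be coupled step by step, the discrepancy on toggle steps being absorbed by the monotonicity of the hope interval established in Lemma~\ref{lemma_decreasing}. Making this coupling rigorous will likely require either a Lyapunov-like potential that combines the uncharged-ignorant count with a penalty for distance-to-$h$ of already-charged ignorants, or a more delicate graph-theoretic invariant tracking which ignorants are still capable of triggering a future charge. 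Either route will probably force a strengthening of the intermediate lemmas of Section~4 so that they remain valid for sub-populations containing previously absorbed ignorants rather than only the original set~$K$.
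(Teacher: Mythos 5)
The first thing to note is that the paper offers no proof of this statement: it is explicitly left as an open conjecture (the paper only proves the negative companion result, that $(|\overline{K}|-1)$-fold convergence can fail when the condition is dropped). So your proposal cannot be checked against an existing argument; it has to stand on its own, and as written it does not close the conjecture.

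The concrete gap is your claim that the lemmas of Section~4, in particular Lemma~\ref{lemma_decreasing}, carry over because they ``only use the instantaneous value $\alpha_i(t)$.'' The proof of Lemma~\ref{lemma_decreasing} applies Lemma~\ref{lemma_interval_1} to every individual in $\mathcal{L}(t)$, and this requires those individuals to have $\alpha_i(t)=0$ at that step. An $i\in\mathcal{L}(t)$ with $\alpha_i(t)>0$ has $x_i(t+1)$ equal to a convex combination of $h$ and an average lying below $x_{\hat{l}(t)}(t)$, so it can land anywhere up to $h$ --- in particular at a point still below $x_{\hat{l}(t)}(t)$ but now within $\varepsilon$ of the old component, whereupon it joins that component and becomes the new $\hat{l}(t+1)$ with $x_{\hat{l}(t+1)}(t+1)<x_{\hat{l}(t)}(t)$. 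Monotonicity of the hope interval therefore fails precisely in the generalized setting, and your coupling step explicitly leans on that monotonicity to absorb the discrepancy at toggle times. The multi-step lemmas (the three-step lemma preceding the first corollary, Lemma~\ref{lemma_one_shrinks}, Lemma~\ref{lemma_final}) likewise use the persistence of ignorant or truth-seeker status over two or three consecutive steps, not merely its instantaneous value, so they too need reproving. Beyond that, your central no-double-charging claim remains a plan rather than a proof, as you acknowledge: the surrogate system in which a charged $j$ is relabelled a truth seeker does not satisfy the WASBOCOS axioms (a truth seeker must have $\alpha_j(t)\ge\alpha$ for \emph{all} $t$, which $j$ does not), so the inductive hypothesis does not apply to it and no actual coupling is constructed. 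Finally, bounding the number of exits from the $\gamma$-ball is not yet $r$-fold interrupted convergence in the paper's sense, which requires the re-entry times $T_i^s$ to be explicit functions of the structural parameters and $T_{i-1}^e$. The base case and the idea of charging each interruption to a newly arriving member of $\overline{K}$ are sensible starting points, but the statement remains open after your argument.
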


The Hegselmann-Krause Conjecture might be generalized to opinions in
$\mathbb{R}^m$ instead of $\mathbb{R}$ when we use a norm instead of
$|\cdot|$ in the definition of the update formula. Using our approach
to prove this $m$-dimensional conjecture would become very technical,
so new ideas and tools are needed. We give an even stronger
conjecture:

\begin{conjecture}
  The $m$-dimensional generalized Hegselmann-Krause Conjecture holds
  and there exists a function $\phi(\Omega,\gamma)$ so that the truth
  seekers in an arbitrary generalized (WASBOCOS) $\Omega$ are
  $\phi(\Omega,\gamma)$-fold interrupted convergent in $\varepsilon$,
  $\alpha$, $\beta$, and $n$.
\end{conjecture}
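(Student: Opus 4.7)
The plan is to reduce the $m$-dimensional problem to essentially one-dimensional arguments by working with support functions in every unit direction $v\in\mathbb{R}^m$, and to generalise the hope interval to a \emph{hope region}. I would define the hope region at time~$t$ as $H(t):=\operatorname{conv}\bigl(\{x_i(t)\mid i\in C^*_K(t)\}\cup\{h\}\bigr)$, where $C^*_K(t)$ is the union of all connectivity components of $\mathcal{G}(t)$ that meet $K$, with $h$ treated as vertex $0$ in the spirit of Definition~\ref{def_lower_upper}. The $m$-dimensional analogue of Lemma~\ref{lemma_decreasing} then reads $H(t+1)\subseteq H(t)$, which follows directly from (\ref{eq_update}): any $x_i(t+1)$ with $i\in C^*_K(t)$ is a convex combination of opinions $x_j(t)$ with $j\in I_i^\varepsilon(t)\subseteq C^*_K(t)$ together possibly with $h$, and therefore lies in $H(t)$. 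Convergence of all truth seekers to $h$ is then equivalent to $\sup_v \ell^v(t)\to 0$, where $\ell^v(t):=\max_{i\in C^*_K(t)}\langle v,x_i(t)-h\rangle$ is the signed support function of $H(t)-h$ in direction $v$.

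To transfer the quantitative shrinking lemmas I would project onto one fixed unit vector $v$ at a time. Writing $y_i^v(t):=\langle v,x_i(t)-h\rangle$, the update (\ref{eq_update}) becomes $y_i^v(t+1)=(1-\alpha_i(t))\sum_{j\in I_i^\varepsilon(t)}\tilde\beta_{ij}(t)\,y_j^v(t)$ with a renormalised $\tilde\beta_{ij}$. Although $I_i^\varepsilon(t)$ is defined through the full norm $\|x_i-x_j\|\le\varepsilon$, orthogonal projection is $1$-Lipschitz, so $|y_i^v(t)-y_j^v(t)|\le\varepsilon$ for every confidence neighbour~$j$. The Section~4 arguments use only this one-dimensional inequality plus the convex-combination structure of (\ref{eq_update}); hence the shrinking lemmas (in particular the analogues of Lemma~\ref{lemma_one_shrinks}, Lemma~\ref{lemma_greater_epsilon}, and Lemma~\ref{lemma_epsilon_interval}) transfer to the pair $\ell^v,\ell^{-v}$ with the same geometric rates $1-\alpha\beta/2$ and $1-\alpha\beta^2/12$. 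A compactness argument on the unit sphere then upgrades directionwise shrinking to uniform shrinking of $\operatorname{diam} H(t)$, proving the $m$-dimensional Hegselmann--Krause convergence.

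The $\phi(\Omega,\gamma)$-fold interruption claim would be derived by tracking the combinatorial evolution of $C^*_K(t)$. Each time an ignorant enters (or reappears in) the truth seekers' confidence component a distraction of the type in Example~\ref{ex_interupted_convergent} may arise in some direction~$v$; afterwards, either the ignorant is absorbed to within $\gamma$ of $h$ or, once the hope region has shrunk below $\varepsilon$ in the relevant direction, it is permanently detached from the truth seekers. Since both resolutions are monotone in a suitable combinatorial sense, and there are only $|\overline K|$ ignorants and finitely many combinatorial types of confidence graph on $[n]\cup\{0\}$, the number of interruptions is bounded by a finite quantity depending on $\Omega$ (and, through the required shrinking thresholds, on $\gamma$), which serves as $\phi(\Omega,\gamma)$.

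The main obstacle is the loss of the total order on $\mathbb{R}$. In~1D, Lemma~\ref{lemma_one_shrinks} cleanly separates the hope interval into a lower half $\ell_1$ and an upper half $\ell_2$, each shrinking more or less independently; in $\mathbb{R}^m$ one has a continuum of support directions $v$, and an off-axis ignorant can attract a truth seeker from one direction while the truth pulls it from another. The bookkeeping of which $v$-slice currently shrinks, and when a distraction in one direction resets the estimates in another, is considerably more intricate than the one-dimensional case analysis around $\hat l$ and $\hat u$; in particular, the argument of Lemma~\ref{lemma_middle} distinguishing $\mathcal N,\mathcal M,\mathcal F$ by scalar distance has to be replaced by an annulus decomposition that respects the ambient geometry. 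A secondary difficulty is turning the existence of $\phi(\Omega,\gamma)$ into an explicit bound, which would require a careful enumeration of the ``configuration types'' traversed by the trajectory, since the confidence graph in $\mathbb{R}^m$ can reorganise in many more ways than on the line.
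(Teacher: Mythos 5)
This statement is one of the paper's two closing \emph{conjectures}: the authors give no proof and explicitly write that using their one-dimensional approach ``would become very technical, so new ideas and tools are needed.'' So there is nothing in the paper to compare your argument against, and your proposal has to stand on its own. As it stands it is a plan rather than a proof, and it has two concrete gaps, both located exactly where you yourself flag difficulties. The central one is the claim that the Section~4 shrinking lemmas ``use only the one-dimensional inequality plus the convex-combination structure'' and therefore transfer to the projections $y_i^v$. They do not. Every quantitative lemma there exploits the fact that on the line the confidence relation is an interval relation: if $x_{\hat l(t)}(t)\le x_j(t)\le x_i(t)$ and $|x_i(t)-x_{\hat l(t)}(t)|\le\varepsilon$, then automatically $i\in I_j^\varepsilon(t)$, so the individual sitting in the annulus $\bigl[\tfrac{\alpha\beta \ell_1(t)}{12},\varepsilon\bigr]$ in Lemma~\ref{lemma_one_step} necessarily influences \emph{every} individual near $\hat l(t)$ and pulls it inward. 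After projecting onto $v$, closeness of $y_i^v$ and $y_j^v$ says nothing about $\|x_i(t)-x_j(t)\|$, so an individual whose \emph{projection} lies in that annulus need not be a confidence neighbour of anyone near the extreme point, and the conclusion of Lemma~\ref{lemma_one_step} fails. The same order-dependence underlies the chain argument of Lemma~\ref{lemma_one_shrinks} (``the ignorant with smallest opinion fulfilling $d(i,j_1,t+1)\le\varepsilon$'', hence $d(j_2,i,t+1)>\varepsilon$) and the case analysis of Lemma~\ref{lemma_middle}; replacing these is the actual content of the conjecture, not a routine transfer.

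The second gap is that $H(t+1)\subseteq H(t)$ does not ``follow directly'' from (\ref{eq_update}). Your argument shows that the opinions of individuals in $C^*_K(t)$ stay inside $H(t)$, but $H(t+1)$ is the hull over $C^*_K(t+1)$, which may acquire individuals that at time $t$ were outside $C^*_K(t)$ and outside $H(t)$ and that drift into confidence range of a truth seeker. In one dimension this is exactly what the proof of Lemma~\ref{lemma_decreasing} rules out, again by means of the total order (the set of individuals strictly left of $\hat l(t)$ cannot overtake it); you offer no substitute in $\mathbb{R}^m$, so even the monotonicity of the hope region is unproved. Finally, the finiteness of $\phi(\Omega,\gamma)$ rests on the assertion that absorption or detachment of ignorants is ``monotone in a suitable combinatorial sense''; since a detached ignorant can be carried back into range by other ignorants and confidence-graph types can recur, that monotonicity is precisely what needs to be proved --- compare the paper's own example showing that dropping $\alpha_i(t)=0$ for ignorants already forces at least $|\overline{K}|-1$ interruptions. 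In short, the support-function reduction is a reasonable first idea, but none of the three load-bearing steps (monotone hope region, directionwise shrinking, bounded number of interruptions) is established, and the first two provably cannot be obtained by the verbatim transfer you describe.
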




\begin{thebibliography}{1}

\bibitem{15.1021F}
S.~{Fortunato}, \emph{{The Krause-Hegselmann Consensus Model with Discrete
  Opinions}}, International Journal of Modern Physics C \textbf{15} (2004),
  1021--1029.

\bibitem{hegselmann2002}
R.~Hegselmann and U.~Krause, \emph{Opinion dynamics and bounded confidence:
  models, analysis and simulation}, Journal of Artificial Societies and Social
  Simulation \textbf{5} (2002), no.~3.

\bibitem{hegselmann2006}
R.~Hegselmann and U.~Krause, \emph{Truth and cognitive division of labour: First steps towards a
  computer aided social epistemology}, Journal of Artificial Societies and
  Social Simulation \textbf{9} (2006), no.~3.

\bibitem{hegselmann_new}
R.~Hegselmann and U.~Krause, \emph{{D}eliberative {E}xchange, {T}ruth, and
  {C}ognitive {D}ivision of {L}abour: {A} {L}ow-{R}esolution {M}odeling
  {A}pproach}, Episteme \textbf{6} (2009), 130--144.

\bibitem{lorenz_survey}
J.~Lorenz, \emph{{C}ontinuous {O}pinion {D}ynamics under {B}ounded
  {C}onfidence: {A} {S}urvey}, International Journal of Modern Physics C
  \textbf{18} (2007), no.~12, 1819--1838.

\bibitem{7291M}
K.~{Malarz}, \emph{Truth seekers in opinion dynamics models}, International
  Journal of Modern Physics C \textbf{17} (2006), no.~10, 1521--1524.

\end{thebibliography}

\end{document}